\numberwithin{equation}{section}
\numberwithin{subsection}{section}
\newlength{\myarrowsize} 
    \newlength{\myoldlinewidth}
\newtheorem*{namedtheorem}{\theoremname}
\newcommand{\theoremname}{testing}
\newtheorem{theo}{Theorem}[section]
\newtheorem{prop}[theo]{Proposition}
\newtheorem{prop-def}[theo]
{Proposition-Definition}
\newtheorem{coro}[theo]{Corollary}
\newtheorem{lemm}[theo]{Lemma}
\theoremstyle{definition}
\newtheorem{defi}[theo]{Definition}
\newtheorem{exam}[theo]{Example}
\newtheorem{rema}[theo]{Remark}
\theoremstyle{remark}
\DeclareMathOperator{\Lie}{Lie}
\newcommand{\reftext}{}
\newcommand\mto{\ifinner\mapsto\else\longmapsto\fi}
\def\displaytimes_#1{\mathrel{\mathop{\times}\limits_{#1}}}
\def\displayotimes_#1{\mathrel{\mathop{\bigotimes}\limits_{#1}}}
\begin{document}


\title[Essential dimension  of group schemes over a local scheme]{Essential dimension of group schemes\\ over a local scheme}

\author{Dajano Tossici}

\address{Institut de Math\'ematiques de Bordeaux, 351 Cours de la Liberation\\
33 405 Talence \\ France}
\email[Tossici]{Dajano.Tossici@math.u-bordeaux.fr}
\maketitle
%
%
%
%
%
%
%
\begin{abstract}             
In this paper we develop the theory of essential dimension of group
schemes over an integral base. Shortly we concentrate over a local base.
As a consequence of our theory we give a result of invariance of the
essential dimension over a field. The case of group schemes over a
discrete valuation ring is discussed.
\end{abstract}
%
%
%
%




\section{Introduction}

The notion of essential dimension of a finite group over a field $k$ was
introduced by Buhler and Reichstein \cite{BR}. It was later extended
to various contexts. First Reichstein generalized it to linear algebraic
groups \cite{Re} in characteristic zero; afterwards Merkurjev gave
a general definition for functors from the category of extension fields
of the base field $k$ to the category of sets \cite{BF}. In
particular one can consider the essential dimension of group schemes
over a field (see Definition \ref{def:edusuale}).

In this paper we would like to extend the notion of essential dimension
of a group scheme over a base scheme more general than a field.

If $G$ is an affine flat group scheme of locally finite presentation
over $S$, a $G$-torsor over $X$ is an $S$-scheme $Y$ with a left
$G$-action by $X$-automorphisms and a faithfully flat and locally of
finite presentation morphism $Y\to X$ over $S$ such that the map
$G\times_{S} Y\to Y\times_{X} Y$ given by $(g,y)\mapsto (gy,y)$ is an
isomorphism. We recall that isomorphism classes of $G$-torsors over $X$
are classified by the pointed set $H^{1}(X,G)$ \cite[III,
Theorem~4.3]{Mi}. If $G$ is commutative, then $\operatorname{H}^{1}(X,
G)$ is a group, and coincides with the cohomology group of $G$ in the
fppf topology.

\begin{defi}
\label{def:edusuale}
Let $G$ be an affine group scheme of finite type over a field $k$. Let
$k\subseteq K$ be an extension field and $[\xi ]\in \operatorname{H}
^{1}(\operatorname{Spec}(K),G)$ be the class of a $G$-torsor
$\xi $. Then the essential dimension of $\xi $ over $k$, which we denote
by $\operatorname{ed}_{k}\xi $, is the smallest non-negative integer
$n$ such that
\begin{itemize}
\item[(i)] there exists a subfield $L$ of $K$ containing $k$, with
$\mathop{\mathrm{tr}\, \mathrm{deg}}\nolimits (L/k) = n$,
\item[(ii)] such that $[\xi ]$ is in the image of the morphism
\begin{equation*}
\operatorname{H}^{1}(\operatorname{Spec}(L),G)\longrightarrow
\operatorname{H}^{1}(\operatorname{Spec}(K),G).
\end{equation*}
\end{itemize}
The essential dimension of $G$ over $k$, which we denote by
$\operatorname{ed}_{k} G$, is the supremum of $\operatorname{ed}_{k}
\xi $, where $K/k$ ranges through all the extension of $K$, and
$\xi $ ranges through all the $G$-torsors over $\operatorname{Spec}(K)$.

\end{defi}

Moreover there is another possible definition, more geometric, for the
essential dimension of group schemes. See \cite[Definition 2.5]{BR} and
\cite[Definition 6.8]{BF}.

\begin{defi}
\label{def:classicalessentialdimension}
Let $G$ be an affine group scheme of finite type over a field $k$. The
\textit{essential dimension} of a $G$-torsor $f:Y\to X$ is the smallest
dimension of a scheme $X'$ over $k$ such that
\begin{itemize}
\item[(i)] there exists a $G$-torsor $f':Y'\to X'$,
\item[(ii)]and a commutative diagram of $k$-rational maps
\begin{equation*}
\vcenter{\xymatrix{Y\ar@{-->}^g[r]\ar^f[d]& Y'\ar^{f'}[d]\\ X\ar@{-->}^{h}[r]& X'}},
\end{equation*}
\end{itemize}
with $g$ dominant and $G$-equivariant. It will be denoted by
$\operatorname{ed}_{S} f$. The \textit{essential dimension} of $G$ over
$k$ is $\sup_{f} \operatorname{ed}_{k} f$ where $f$ varies between all
$G$-torsors. It will be denoted by $\operatorname{ed}_{k} G$.
\end{defi}

We are going to generalize this second definition over a base more
general than a field. At a very first sight, over a field, the new
definition could look slightly different from \reftext{Definition~\ref{def:classicalessentialdimension}}. However we will prove that
they are equivalent.

Very soon we specialize to the case that the base is local. And we
generalize some standard basic results true over a field. For instance
we prove the existence, under some conditions on $G$, of a classifying
torsor whose essential dimension gives the essential dimension of the
group scheme $G$.

As a natural application of this theory we prove the following result.

{
\renewcommand{\thetheo}{4.1}
\begin{theo}
Let $G$ be an affine faithfully flat group scheme of finite presentation over an integral locally noetherian scheme
$S$. Then there exists a non-empty open subscheme $U$ of $S$ such that
for any $s\in U$, with residue field $k(s)$,
\begin{equation*}
\operatorname{ed}_{k(s)}(G_{k(s)})\le \operatorname{ed}_{k(S)}(G_{k(S)})=
\operatorname{ed}_{\mathcal{O}_{S,s}}G_{\mathcal{O}_{S,s}}.
\end{equation*}
\end{theo}
\addtocounter{theo}{-1} }

In particular we obtain the following corollary.

{
\renewcommand{\thetheo}{4.4}
\begin{coro}
Let $k$ be a field, let $X$ be an integral scheme of finite type over
$k$ with fraction field $k(X)$ and let $G$ be an affine group scheme of
finite type over $k$. There exists a non-empty open subscheme $U$ of
$X$ such that for any $x\in U$, with residue field $k(x)$,
\begin{equation*}
\operatorname{ed}_{k(x)}(G_{k(x)})\le \operatorname{ed}_{k(X)}(G_{k(X)})=
\operatorname{ed}_{\mathcal{O}_{X,x}}G_{\mathcal{O}_{X,x}}.
\end{equation*}
In particular if the set of $k$-rational points of $X$ is Zariski dense
then
\begin{equation*}
\operatorname{ed}_{k}(G)=\operatorname{ed}_{k(X)}(G_{k(X)}).
\end{equation*}
\end{coro}
\addtocounter{theo}{-1}}%

If $X$ is the affine line the last part of the
Corollary is the \textit{homotopy invariance} theorem of Berhui and Favi
\cite[Theorem 8.4]{BF}. Of course by induction it works over
$\mathbb{A}^{n}_{k}$. It seems to us that the strategy of that proof can
not be immediately generalized to the general case because of technical
Lemma \cite[Lemma 8.3]{BF}.

As A. Vistoli pointed to us, the last part of the Corollary was also
proven in the unpublished result \cite[Proposition 2.14]{BRV} in the
case $X=\mathbb{A}^{n}_{k}$ or $k$ algebraically closed. In this case
it seems that their argument can be generalized to (geometrically
integral) varieties with the set of $k$-rational points which is Zariski
dense.

It would be interesting to know if the converse of the last part of the
\reftext{Corollary~\ref{Thm:homotopytheorem}} is true, more precisely

\textit{(Q) if $X$ is an integral scheme of finite type over $k$ and
$ \operatorname{ed}_{k}(G)=\operatorname{ed}_{k(X)}(G_{k(X)}) $ for any
affine group scheme $G$ of finite type over $k$, is the set of
$k$-rational point of $X$ Zariski dense?}

Some considerations are given at the end of section \S \ref{sec4}. At the moment
the question is open, except for finite fields (proven in
\cite[Proposition 3.6 and Lemma 4.5]{NDT}).

\section*{Acknowledgments}

\subsection*{Acknowledgements} I would like to thank Q.~Liu,
C.~Pepin,  M.~Romagny, J.~ Tong and A.~Vistoli   for very useful comments and conversations.  Finally I also thank the referee for useful commments.
I have been partially supported by the project ANR-10-JCJC 0107 from the
Agence Nationale de la Recherche.


\section{$S$-rational maps}\label{sec2}

In this section we recall some notions about rational morphisms. Here
$S$ will denote a general scheme. Starting from the next section we will
add some hypotheses on it. We will put some details since definitions
we give here are slightly different from classical references such as
\cite[\S 2.5]{BLR} or \cite[\S 20]{EGAIV4}. We will point out the
differences later on. We begin with the notion of schematically
dominant.
\begin{defi}
Let $f:X\to Y$ be a morphism of schemes. We say that $f$ is
\textit{schematically dominant} if $f^{\#}:\mathcal{O}_{Y}\to f_{*}
\mathcal{O}_{X}$ is injective. We say that $f:X\to Y$ is
\textit{schematically dense} if it is schematically dominant and an open
immersion.
\end{defi}
\begin{rema}
If $Y$ is reduced then we recover the usual definitions of dominant and
dense \cite[Proposition 11.10.4]{EGAIV3}. Without other assumptions the
above definition \textit{does not} mean that a morphism is schematically
dominant if and only if the schematic image is $Y$. This is true if the
morphism is quasi-compact (see \cite[Proposition 10.30]{GW}).
\end{rema}
We also have the relative version.
\begin{defi}
Let $f:X\to Y$ be a morphism of $S$-schemes. We say that $f$ is
$S$-dominant if, for any $s\in S$, the morphism $X_{s}\to Y_{s}$ is
schematically dominant, where
$Y_{s}:=Y\times_{S}\operatorname{Spec}(k(s))$ and $X_{s}:=X\times_{S}
\operatorname{Spec}(k(s))$ are the fibers over $S$. If moreover $f$ is
an open immersion we say that $X$ is $S$-\textit{dense} in $Y$.

We say that $f$ is $S$-\textit{universally dominant} if it is
$S'$-dominant under any base change $S'\to S$. In the case of an open
immersion we say that $f$ is $S$-\textit{universally dense}.
\end{defi}

We remark that $S$-universally schematically dominant implies
$S$-schematically dominant. Under some mild conditions the converse is
true.
\begin{prop}
\label{univdomin=schemdomin}
Let $f:X\to Y$ be a morphism of $S$-schemes with $X$ flat over $S$. In
any of these two situations
\begin{itemize}
\item[(i)] $Y$ locally noetherian,
\item[(ii)] $f$ is open immersion and $Y$ flat locally of finite
presentation over $S$,
\end{itemize}
then $f$ is $S$-dominant if and only if it is $S$-universally dominant.
\end{prop}
\begin{rema}
\label{rem:univdom}
We remark that from the Proposition it follows that, under the same
hypotheses, $f$ is $S$-schematically dominant if and only if
$f_{T}$ is $T$-schematically dominant for any base change $T\to S$.
\end{rema}

\begin{proof}
\cite[Th\'{e}or\`{e}me 11.10.9, Proposition 11.10.10]{EGAIV3}
\end{proof}

Let $X$ and $Y$ be two $S$-schemes. Let $U$ and $U'$ be two $S$-dense
open subschemes of $X$. If we have two morphisms $f:U\to Y$ and
$f':U'\to Y$ we say that they are \textit{equivalent} if there exists
an open subscheme $V\subseteq U'\cap U$, which is $S$-dense in $X$, such
that $f$ and $f'$ coincide over $V$. One easily verifies that it is an
equivalence relation.
\begin{defi}
An $S$-rational map between two $S$-schemes $X$ and $Y$ is the
equivalence class of an $S$-morphism $f:U\to Y$ where $U$ is $S$-dense
in $X$. An $S$-rational morphism is denoted by $f:X\dashrightarrow Y$
and for any $U$ as above we say that $f$ is defined over $U$.
\end{defi}

The definition here is stronger than EGA's definition of $S$-pseudo
morphism \cite[20.2.1]{EGAIV4}. In fact there it is not required that
the open subscheme is schematically dense on each fiber of $X$. This difference
will be very important in the definition of compressions
(\S \ref{sectio:compressions}), in order to have a theory of essential
dimension over a general base which is compatible with the theory we
obtain when we restrict to a point. While in \cite[\S 2.5]{BLR} the
definition of rational morphisms is the same except the fact that all
schemes considered are $S$-smooth and so $S$-dense means just Zariski
dense on the fibers.

\begin{defi}
An $S$-rational map $f: X\dashrightarrow Y$ is $S$-dominant if it can
be represented by an \textit{$S$-dominant} morphism.
\end{defi}

In fact, by the following lemma, the above definition can be restated
saying that $f$ is $S$-dominant if any of its representatives is
$S$-dominant.

\begin{lemm}
Let $f: X\dashrightarrow Y$ be an $S$-rational map. Let us consider two
representatives $f_{1}:U_{1}\to Y$ and $f_{2}:U_{2}\to Y$. Then
$f_{1}$ is $S$-dominant if and only if $f_{2}$ is $S$-dominant.
\end{lemm}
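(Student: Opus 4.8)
The claim is that $S$-dominance of a representative of an $S$-rational map $f:X\dashrightarrow Y$ is independent of the chosen representative. So let $f_1:U_1\to Y$ and $f_2:U_2\to Y$ be two representatives of $f$. By definition of the equivalence relation defining $S$-rational maps, there is an open subscheme $V\subseteq U_1\cap U_2$ which is $S$-dense in $X$ and on which $f_1$ and $f_2$ agree; call the common restriction $g:V\to Y$. It suffices to show that, for $i=1,2$, the morphism $f_i$ is $S$-dominant if and only if $g$ is $S$-dominant — the claim then follows by symmetry. Fix $i$ and write $U=U_i$, $f=f_i$, so we have open immersions $V\hookrightarrow U\hookrightarrow X$, all $S$-dense, and $g=f\circ(V\hookrightarrow U)$.

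The statement is fibrewise over $S$, so fix $s\in S$ and base change everything to $\spec(k(s))$; we may thus assume $S=\spec(k(s))$ is a field and we must show that $f:U\to Y$ is schematically dominant iff $g=f|_V:V\to Y$ is schematically dominant, where now $V\hookrightarrow U$ is a dense open immersion of $k(s)$-schemes (schematic density over a field reduces to ordinary density since $V$ and $U$ are the fibres of $S$-dense opens). First I would reduce to the case that $U$ is reduced: schematic dominance of a morphism $h:Z\to Y$ depends only on the scheme-theoretic image of $Z$ in $Y$, and the scheme-theoretic image of $Z$ equals that of $Z_{\mathrm{red}}$ — more precisely $\cO_Y\to h_*\cO_Z$ is injective iff $\cO_Y\to h_*\cO_{Z_{\mathrm{red}}}$ is, because... actually the cleanest route is: the open immersion $V\hookrightarrow U$ is quasi-compact when $U$ is, say, locally noetherian or the open is retrocompact, and a quasi-compact dense open immersion into a reduced scheme is schematically dense, hence schematically dominant; then use that schematic dominance is preserved under composition and "cancels" appropriately.

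Concretely, the key input is the following two-part fact about composition $V\xrightarrow{j} U\xrightarrow{f} Y$: (a) if $j$ is schematically dominant and $f$ is schematically dominant then $f\circ j$ is schematically dominant (immediate: $\cO_Y\hookrightarrow f_*\cO_U\hookrightarrow f_*j_*\cO_V$); and (b) if $f\circ j$ is schematically dominant then $f$ is schematically dominant (immediate: $\cO_Y\to f_*\cO_U\to f_*j_*\cO_V=(f\circ j)_*\cO_V$ is injective, so the first map $\cO_Y\to f_*\cO_U$ is injective). Part (b) needs no hypothesis on $j$ at all. Part (a) needs $j=(V\hookrightarrow U)$ schematically dominant. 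So the whole lemma comes down to verifying that an $S$-dense open immersion $V\hookrightarrow U$ is automatically $S$-schematically dominant, i.e. $\cO_U\to j_*\cO_V$ is injective; fibrewise this says a dense open immersion into a scheme over a field is schematically dominant, which holds after reducing to the reduced (hence, over a field, generically-reduced-enough) situation, or directly because the kernel of $\cO_U\to j_*\cO_V$ is a quasi-coherent ideal supported on the closed complement $U\setminus V$, which has empty interior, forcing the ideal to vanish on the reduced structure and — since we only need injectivity of the map of structure sheaves and $V$ dense — one argues on stalks: a section of $\cO_U$ vanishing on the dense open $V$ vanishes at every point of $U$ since its non-vanishing locus would be open and meet $V$.

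The main obstacle, and the only place any care is needed, is the subtlety flagged in the paper's own Remark after the definition of schematically dominant: over a non-reduced or non-quasi-compact situation, "schematically dominant" is genuinely stronger than "scheme-theoretic image is everything", so one must not sloppily identify schematic dominance with surjectivity of scheme-theoretic image. The safe path is to work fibre-by-fibre over $S$ (where schematic density of the $S$-dense opens becomes usable) and to reduce to reduced schemes, where Proposition \ref{univ domin=schem domin}'s supporting references and the elementary stalk argument above apply; the composition facts (a) and (b) then finish the proof with no further input.
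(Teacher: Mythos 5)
Your reduction to the field case and your composition facts (a) and (b) are fine and are essentially the paper's diagram argument. The gap is in the one step that actually carries the content: showing that the common open $V\In U_1\cap U_2$ is \emph{schematically} dense in $U_1$ (equivalently that $j:V\hookrightarrow U_1$ is schematically dominant), which is what feeds into (a). You discard the hypothesis that $V$ is $S$-dense in $X$ and try to recover schematic dominance from mere topological density: you assert that ``schematic density over a field reduces to ordinary density,'' that one may pass to $U_{\mathrm{red}}$ because ``the scheme-theoretic image of $Z$ equals that of $Z_{\mathrm{red}}$,'' and finally that a section of $\cO_U$ vanishing on a dense open vanishes everywhere ``since its non-vanishing locus would be open.'' All three claims are false for non-reduced schemes, which is exactly the situation these definitions are designed to handle (the paper's Remark after the definition stresses that reducedness is needed to identify schematic and ordinary density). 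The non-vanishing locus of a section is its support, which is \emph{closed}, not open; a nonzero section can be supported on a nowhere dense closed subset. The paper's own Example provides the counterexample: on $U=\spec(k[x,y]/(xy,x^2))$ the section $x$ vanishes on the dense open complement of the origin but is not zero, so that dense open is not schematically dense. Hence your argument, as written, proves the lemma only under a reducedness assumption that is not part of the statement.

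The repair is both simpler and is what the paper does: by the definition of the equivalence relation, $V$ is $S$-dense in $X$, which over the field $k(s)$ means precisely that $\cO_X\to i_*\cO_V$ is injective ($i:V\to X$ the inclusion); restricting this injection to the open subscheme $U_1$ (injectivity is local, and $(i_*\cO_V)|_{U_1}=j_*\cO_V$ since $V\In U_1$) gives that $\cO_{U_1}\to j_*\cO_V$ is injective, i.e.\ $V$ is schematically dense in $U_1$, with no reducedness or quasi-compactness needed. With that in hand, your steps (a) and (b) — push forward the injection $\cO_{U_1}\to j_*\cO_V$ by $f_1$ to see $g$ is schematically dominant, then factor $\cO_Y\to (f_2)_*\cO_{U_2}\to g_*\cO_V$ to conclude $f_2$ is schematically dominant — complete the proof exactly as in the paper.
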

\begin{proof}
By symmetry it is sufficient to prove just one implication. Moreover,
by definition, we can reduce to the case $S$ is the spectrum of a field
and in this case $S$-dense is simply schematically dense. Now let
$V$ be an open subscheme $V\subseteq U_{1}\cap U_{2}$ and schematically
dense in $X$ such that $f_{1}$ and $f_{2}$ coincide over $V$, and we
call $g$ the restriction. Let us suppose that $f_{1}$ is schematically
dominant. Then the morphism $\mathcal{O}_{Y}\to {(f_{1})}_{*}
\mathcal{O}_{U_{1}}$ is injective. Since $V$ is schematically dominant
in $X$ it is also schematically dominant in~$U_{1}$. So, using the
following diagram, {one can} easily see that $g$ is schematically dominant and
therefore $f_{2}$ is schematically dominant.
\begin{equation*}
\vcenter{\xymatrix{  &(f_1)_*\mathcal {O}_{U_1}\ar[rd]& \\
\mathcal {O}_Y\ar[ru]\ar[rd]& & g_*\mathcal {O}_V\\
& (f_2)_*\mathcal {O}_{U_2}\ar[ru]&}}\qedhere
\end{equation*}
\end{proof}

\begin{defi}
Let $f:X\dashrightarrow Y$ be an $S$-rational map. We say that it is
$S$-\textit{birational} if it is $S$-dominant and there exists a
representative $f_{1}:U\to Y$ which is an open immersion. We will say
that $X$ and $Y$ are $S$-birational if there exists an $S$-birational
map between $X$ and $Y$.
\end{defi}

\begin{defi}
Let $S$ be a scheme, $f:X\dashrightarrow Y$ and $g:Y\dashrightarrow Z$
two $S$-rational maps. We call $g\circ f:X\dashrightarrow Z$, if it
exists, the rational map represented by $g\circ {f_{U}}$, where $U$ is
an $S$-dense open subscheme where $f$ is defined and such that $g$ is
defined over $f(U)$.
\end{defi}

In general it is not possible to define the composition of two
$S$-rational maps, even if they are $S$-schematically dominant and the
schemes are irreducible. This is possible if we use the classical
definition of dominant. Here is an example where the composition does
not work.

\begin{exam}
Let $k$ be a field. Let us consider
$X=\operatorname{Spec}(k[x,y]/(xy, x^{2}))$, $Y=
\operatorname{Spec}(k[x,y]/\allowbreak(x^{2}))$ and $Z$ equal to $Y$ minus the
origin. Then let $f:X\to Y$ be the natural inclusion and let
$g:Y\dashrightarrow Z$ the birational morphism induced by the identity
over $Z\subseteq Y$. Then the composition $g\circ f$ is defined over
the open subscheme $X$ minus the origin. But this open subscheme, which is the
maximal where $g\circ f$ is defined, is not schematically dense since
the embedded point $(x,y)$ does not belong to it.
\end{exam}

However we can define the composition of $S$-rational maps in some
cases.
\begin{lemm}
Let $S$ be a scheme, $f:X\dashrightarrow Y$ and $g:Y\dashrightarrow Z$
two $S$-rational maps. In the following cases the composition
$g\circ f$ exists.
\begin{itemize}
\item[(i)] $g$ is a morphism,
\item[(ii)] $f$ is a flat locally of finite presentation morphism and
$Y$ is locally {noetherian},
\item[(iii)] $X\to S$ has integral fibers and $f$ is $S$-dominant.
\end{itemize}
\end{lemm}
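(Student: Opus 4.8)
The plan is to choose, once and for all, a representative $f_1\colon U\to Y$ of $f$ (so $U$ is $S$-dense in $X$) and a representative $g_1\colon V\to Z$ of $g$ (so $V$ is $S$-dense in $Y$), and to set $W:=f_1^{-1}(V)$, which is an open subscheme of $U$ and hence of $X$. Since $f_1(W)\In V$, the morphism $g_1\circ(f_1|_W)\colon W\to Z$ is defined, and it will represent $g\circ f$ as soon as we know that $W$ is $S$-dense in $X$. So in each of the three cases the whole task reduces to showing $W\In X$ is $S$-dense. Case \textup{(i)} is then immediate: if $g$ is a morphism we may take $V=Y$, whence $W=f_1^{-1}(Y)=U$ is already $S$-dense in $X$, and $g\circ f$ is represented by $g\circ f_1$.

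For \textup{(ii)} and \textup{(iii)} I would argue fibrewise. Because fibre products commute with base change, $W_s=(f_1)_s^{-1}(V_s)\In U_s$ for every $s\in S$; since $U_s$ is schematically dense in $X_s$ by hypothesis and schematic density of open immersions is transitive (left exactness of pushforward), it suffices to prove that $(f_1)_s^{-1}(V_s)$ is schematically dense in $U_s$ for each $s$. In case \textup{(ii)} the morphism $(f_1)_s\colon U_s\to Y_s$ is flat and locally of finite presentation, and one concludes from the fact that such a morphism pulls back schematically dense open subschemes to schematically dense open subschemes — either by quoting the relevant statement of \cite[\S 11.10]{EGAIV3} directly (in the spirit of the proof of Proposition~\ref{univ domin=schem domin}), or via associated points: a flat morphism satisfies $\operatorname{Ass}(U_s)=\bigcup_{q\in\operatorname{Ass}(Y_s)}\operatorname{Ass}((U_s)_q)$, a schematically dense open contains all associated points of the ambient scheme, so $(f_1)_s$ carries $\operatorname{Ass}(U_s)$ into $\operatorname{Ass}(Y_s)\In V_s$, and therefore $(f_1)_s^{-1}(V_s)\supseteq\operatorname{Ass}(U_s)$ is schematically dense in $U_s$.

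In case \textup{(iii)} I would first record the elementary observation that a schematically dominant morphism has dense image, and more generally a schematically dense open is topologically dense: if a nonempty open $W'$ of the target were disjoint from the image of $(f_1)_s$ (resp. from $U_s$), then evaluating the injection $\cO_{Y_s}\to (f_1)_{s\,*}\cO_{U_s}$ (resp. $\cO_{X_s}\to\cO_{U_s}$) on $W'$ would give $\cO_{Y_s}(W')=0$ (resp. $\cO_{X_s}(W')=0$), forcing $W'=\emptyset$. Now $f_1$ is $S$-dominant, so $(f_1)_s$ has dense image in $Y_s$; moreover $V_s$ is a nonempty open of $Y_s$ (a schematically dense open is nonempty whenever the ambient scheme is, the case $Y_s=\emptyset$ being trivial), so $(f_1)_s^{-1}(V_s)$ is a nonempty open of $U_s$. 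But $X_s$ is integral, hence its dense open $U_s$ is integral, and every nonempty open of an integral scheme contains the generic point and is therefore schematically dense. This settles \textup{(iii)}.

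The only step with genuine content is the flat-pullback statement used in \textup{(ii)}. The point to watch, if one wants the lemma with no finiteness assumptions on $X$ and $Y$, is the characterization of schematic density in terms of associated points, which requires the fibres $X_s$ and $Y_s$ to have locally finitely many associated points; absent that, one should invoke the flat base change statement for schematic density directly instead of passing through $\operatorname{Ass}$. Everything else is formal manipulation of the definitions of $S$-dense, schematically dominant, and of the composition of $S$-rational maps.
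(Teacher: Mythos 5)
Your proof is correct and takes essentially the same route as the paper's: reduce to a single fibre (i.e. to $S$ a point), then show that the preimage of an $S$-dense open on which $g$ is defined is again $S$-dense, case (i) being trivial, case (ii) following from flat pullback of schematically dense opens, and case (iii) from the facts that $S$-dominance forces the preimage to be nonempty and that a nonempty open of an integral scheme is schematically dense. The only difference is cosmetic: where the paper gets nonemptiness in (ii) from openness of a flat, locally finitely presented morphism and leaves the schematic-density step as ``one sees easily,'' you justify it via associated points, correctly flagging the finiteness caveat that this requires.
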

\begin{proof}
The first case is clear. For the other two situations take an $S$-dense
$V$ of $Y$ where $g$ is defined. We will prove that $U:=f^{-1}(V)$ is
$S$-dense in $X$ and so the composition exists. Clearly we can suppose
that $S$ is a point. For (ii) we remark that since $f$ is flat locally
of finite presentation then $f$ is an open map. So $f(X)$ is open. Then
it intersects~$V$, which is schematically dense. Therefore $f^{-1}(V)$
is non-empty. Now since $f$ is flat then $f^{-1}(V)$ is schematically
dense in $X$, by \cite[Lemma 28.24.13, Tag 081H]{stacksproject}, since
any open subscheme of a locally  noetherian  scheme is retrocompact.

For (iii) we observe that $f^{-1}(V)$ is non-empty since $f$ is
schematically dominant. Now if $f^{-1}(V)$ is a non-empty open set of
$X$ then it is dense, since $X$ is irreducible. But then it is also
schematically dense since $X$ is reduced.
\end{proof}

It is easy to see that the composition is well defined, i.e. does not
depend on the representative of $f$.

\begin{lemm}
\label{lem:fiberproduct}
Let $S$ be a scheme and let $f:Y\to Z$ and $g:W\to Z$ be morphisms of
$S$-schemes. For any $S$-rational maps $h_{1}:T\dashrightarrow Y$ and
$h_{2}:T\dashrightarrow W$ such that $f\circ h_{1}$ is equal to
$g\circ h_{2}$ as $S$-rational maps then there exists a unique
$S$-rational map $h:T\dashrightarrow Y\times_{Z} W$ such that
$p_{Y}\circ h=h_{1}$ and $p_{W}\circ h=h_{2}$ where $p_{Y}$ and
$p_{W}$ are the projections over $Y$ and $W$.
\end{lemm}
\begin{proof}
Easy to prove using the universal property of cartesian product over an
$S$-dense open subscheme where $f\circ h_{1}$ and $g\circ h_{2}$ are
equal.
\end{proof}

\begin{lemm}
\label{lem:basechangerationalmaps}
Let $f:X\dashrightarrow Y$ be an $S$-rational map with $X$ flat locally
of finite presentation over $S$. For any morphism $T\to S$, we have a
$T$-rational map $f_{T}:{X_{T}\dashrightarrow Y_{T}}$ obtained by base
change.
\end{lemm}

\begin{proof}
If $f$ is defined over an $S$-dense $U$ then $U_{T}$ is
$T$-schematically dense in $X_{T}$ by
\reftext{Proposition~\ref{univdomin=schemdomin}}. Therefore, using
\reftext{Lemma~\ref{lem:fiberproduct}} we have a $T$-rational map
$f_{T}:{X_{T}\dashrightarrow Y_{T}}$.
\end{proof}

\section{Definitions and general results}\label{sectio:compressions}

In the following $S$ will be an integral locally noetherian scheme. And
till the end of the paper, if not differently specified, for \textit{group scheme}  we will mean an
affine faithfully flat group scheme of finite presentation over the
base. Let $f:X\to S$ be a faithfully flat morphism of locally finite
type. If $\eta $ is the generic point of $S$ we call $\dim (f^{-1}
\eta )$ the \textit{relative dimension} of $X$ over $S$ and we denote
it by $\dim_{S} X$.

If $X$ is also irreducible then $f:X\to S$ is equidimensional, i.e. for
any $x\in X$, $\dim (f^{-1}\eta )=\dim_{x}f^{-1}(f(x))$ (see
\cite[Corollaire 6.1.1, Proposition 13.2.3]{EGAIV2}).

For any scheme $T$ we call $\mathfrak{C}_{T}$ the full subcategory of
$(\mathrm{Sch}/T)$ given by faithfully flat schemes $X$ of locally
finite presentation over $T$ with geometrically integral fibers.

\begin{lemm}
\label{lemm:integralfibersimpliesintegral}
Let $f:X\to T$ be a flat morphism.
\begin{itemize}
\item[(i)] If $f$ is of locally finite presentation, $T$ irreducible and
any fiber is irreducible then $X$ is irreducible.
\item[(ii)] If $X$ and $T$ are locally noetherian, $T$ is reduced and
any fiber of $f$ is reduced then $X$ is reduced.
\end{itemize}
\end{lemm}
\begin{proof}
(i) Since $f$ is flat of locally finite presentation then $f$ is open.
Now let $U$ and $V$ be two open sets of $Y$. Then $f(U)$ and
$f(V)$ are open since $f$ is open. Moreover their intersection is
non-empty since $T$ is irreducible. Let $t$ be a point of this
intersection. So $U$ and $V$ intersect the fiber over $t$. Since any
fiber of $f$ is irreducible then there is point over $t$ contained in
$U$ and $V$. So $X$ is irreducible.

(ii) This is \cite[Corollaire 3.3.5]{EGAIV2}.
\end{proof}

By the previous Lemma any object of $\mathfrak{C}_{T}$ with $T$ integral
is integral.

And if $T'$ is another scheme with a morphism $\pi :T'\to T$ and $X$ is
an object of $\mathfrak{C}_{T}$ then $X_{T'}:=X\times_{T} T'$ is an
object of $\mathfrak{C}_{T'}$. In fact if $t'$ is a point of $T$ then
$X_{t'}$ is isomorphic to $X_{t}\times_{\operatorname{Spec}(k(t))}
\operatorname{Spec}(k(t'))$ where $\pi (t')=t$. So $X_{t'}$ is
geometrically integral.

\begin{defi}
Let $G$ be a group scheme over $S$. Let $f:Y\to X$ and $f':Y' \to X'$
be two $G$-torsors with $X$ and $X'$ objects of $\mathfrak{C}_{S}$. We
say that $f'$ is an $S$-\textit{weak compression} of $f$ if there exists
a diagram over $S$
\begin{equation*}
\vcenter{\xymatrix{Y\ar@{-->}^g[r]\ar^f[d]& Y'\ar^{f'}[d]\\ X\ar@{-->}^{h}[r]& X'}},
\end{equation*}
which is commutative (i.e. $f'\circ g$ and $h\circ f$ are equal as
$S$-rational maps), where $g$ and $h$ are $S$-rational and $g$ is
$G$-equivariant (i.e. there exists an open $S$-dense subscheme $U$ of
$Y$ stable by $G$ such that $g_{U}:U\to X$ represents $g$ and it is
$G$-equivariant).

We say that $f'$ is an \textit{$S$-compression} of $f$ if moreover
$g$ is an $S$-dominant map. And we say that a weak $S$-compression
(resp. $S$-compression) $f'$ is \textit{defined everywhere} if $g$ and
$h$ are morphisms.
\end{defi}

We have the following easy result.
\begin{lemm}
\label{lem:dominantcompressions}
Let $X,X'$ be objects of $\mathfrak{C}_{S}$. If $f':Y'\to X'$ is a weak
$S$-compression of $f:Y\to X$ then there exists an $S$-dense open
subscheme $U$ of $X$ such that $f':Y'\to {X'}$ is a defined everywhere
weak $S$-compression of $f_{U}:Y_{U}\to U$.

If $f'$ is an $S$-compression then for any $S$-dense open subscheme
$U'$ of $X'$ there exists an $S$-dense open subscheme $U$ of $X$ such
that $f'_{U'}:Y'_{U'}\to {U'}$ is a defined everywhere $S$-compression
of $f_{U}:Y_{U}\to U$.
\end{lemm}
\begin{proof}
We use notation of the definition. Take an $S$-dense open subscheme
$W$ in $Y$ which is $G$-stable and such that $g$ is defined over $W$.
Then $W\to f(W)$ is a $G$-torsor and $f(W)$ is an $S$-dense open
subscheme. The induced morphism $f(W)\to X'$ clearly represents $h$. If
we set $f(W)=U$ we have that $f'$ is a weak compression of $f_{U}$.

The last part follows remarking that if $U'$ is an $S$-dense open
subscheme of $Y$ then $f_{U'}:Y'_{U'}\to {U'}$ is a weak $S$-compression
of $f$ since $g$ is dominant.
\end{proof}

\begin{defi}
\label{def:essentialdimension}
The \textit{essential dimension} of a $G$-torsor $f:Y\to X$ is the
smallest relative dimension of $X'$ over $S$ in a weak compression
$f':Y'\to X'$ of $f$, where $X'$ is an object of $\mathfrak{C}_{S}$ and
it will be denoted by $\operatorname{ed}_{S} f$. The \textit{essential
dimension} of $G$ over $S$ is $\sup_{f} \operatorname{ed}_{S} f$ where
$f$ varies between all $G$-torsors over objects of $\mathfrak{C}_{S}$.
It will be denoted by $\operatorname{ed}_{S} G$.
\end{defi}

We recall that, apparently, the above definition is different, in the
case $S$ is a field, from the usual definition which uses compressions
(\reftext{Definition~\ref{def:classicalessentialdimension}}). There are three
differences. First usually one considers compressions instead of weak
compressions. Over a field $k$ this is not a problem: if we have a
$k$-weak compression
\begin{equation*}
\vcenter{\xymatrix{Y\ar@{-->}^g[r]\ar^f[d]& Y'\ar^{f'}[d]\\ X\ar@{-->}^{h}[r]& X'}},
\end{equation*}
then we have that $f'_{Z}:Y'_{Z}\to Z$ is a $k$-compression of $f$,
where $Z$ is the schematic image of $h$. In fact it is clear that
$h:X\dashrightarrow Z$ is $k$-dominant. Since $f'_{Z}$ is faithfully
flat then also $g:Y\dashrightarrow Y'_{Z}$ is $k$-dominant.

The second difference is the fact that we are supposing that the scheme
$X'$ is \textit{geometrically} integral. Thirdly we take locally finite
presentation schemes, but this does not cause problems since the
essential dimension of an algebraic affine group scheme is finite.

If $S$ is the spectrum of a field then by \cite[Lemma~6.11 and
Remark~6.12]{BF} the classical definition using compressions
\cite[Definition~6.8]{BF} of essential dimension is equivalent to the
functorial definition. We will prove later in
\reftext{Proposition~\ref{Prop:edoverafield}} that over a field they
are both equivalent to our definition.

\begin{lemm}
\label{lem:compositionofcompressions}
Let $f:Y\to X$, $f':Y'\to X'$ and $f'':Y''\to X''$ be three $G$-torsors
over $S$ with $X,X',X''$ objects of $\mathfrak{C}_{S}$.
\begin{itemize}
\item[(i)] If $f''$ is a defined everywhere weak $S$-compression of
$f'$ and $f'$ is a weak $S$-com\-pression of $f$ then $f''$ is a weak
$S$-compression of $f$.
\item[(ii)] If $f''$ is a weak $S$-compression of $f'$ and $f'$ is an
$S$-compression of $f$ then $f''$ is a weak $S$-compression of $f$.
\item[(iii)] If $f''$ is an $S$-compression of $f'$ and $f'$ is an
$S$-compression of $f$ then $f''$ is an $S$-compression of $f$.
\end{itemize}

\end{lemm}
\begin{proof}
The proof of $(i)$ is immediate, since in this case the composition of
the involved rational maps is well defined.

Now since $f''$ is a weak $S$-compression of $f'$ then, by
\reftext{Lemma~\ref{lem:dominantcompressions}} there exists an
$S$-dense open subscheme $U'$ of $X'$ such that $f''$ is a defined
everywhere weak $S$-compression of $f'_{U'}$. Therefore, by the last
part of \reftext{Lemma~\ref{lem:dominantcompressions}}, there exists an
$S$-dense open subscheme $U$ of $X$ such that $f'_{U'}$ is an
$S$-defined everywhere compression of $f_{U}$. Applying $(i)$ we
obtain~$(\mathit{ii})$. Moreover we also obtain $(\mathit{iii})$ since composition of
dominant maps is dominant.
\end{proof}

\begin{lemm}
\label{lem:basechange}
Let $f:Y\to X$ be a $G$-torsor over $S$ with $X$ an object of
$\mathfrak{C}_{S}$ which is integral. For any morphism $q:T\to S$, with
$T$ integral locally noetherian, then
\begin{equation*}
\operatorname{ed}_{S} f\ge \operatorname{ed}_{T} f_{T}.
\end{equation*}
\end{lemm}

\begin{proof}
We can clearly suppose that $\operatorname{ed}_{S} f$ is finite. Let us
consider a weak $S$-compression
\begin{equation*}
\vcenter{\xymatrix{Y\ar@{-->}^g[r]\ar^f[d]& Y'\ar^{f'}[d]\\ X\ar@{-->}^{h}[r]& X'}}.
\end{equation*}
So, using \reftext{Lemma~\ref{lem:basechangerationalmaps}}, we obtain, by base
change over $T$, a weak $T$-compression
\begin{equation*}
\vcenter{\xymatrix{Y_T\ar@{-->}^{g_T}[r]\ar^{f_T}[d]&
Y'_T\ar^{f'_T}[d]\\ X_T\ar@{-->}^{h_T}[r]& X'_T}}.
\end{equation*}
So the above diagram gives a weak $T$-compression. Let us take $X'$ such
that $\operatorname{ed}_{S} f=\dim_{S} X'$. By \reftext{Lemma~\ref{lemm:integralfibersimpliesintegral}}, $X'$ is irreducible so,
as remarked at the beginning of the section, the morphism $f:X'\to S$
is equidimensional. We have now to compute $\dim_{T} X'_{T}=\dim \pi
_{T}^{-1} \xi $ where $\pi : X'_{T}\to T$ is the structural morphism and
$\xi $ is the generic point of $T$. If $s=q(\xi )$ then, as schemes,
$\pi_{T}^{-1} \xi $ is isomorphic to $X'_{s}\times_{s} \xi $. So
$\dim_{T} X'_{T}=\dim X'_{s}$ since the dimension of a finite type
scheme over a field does not change when extending the field. Since
$X'$ is equidimensional we also have $\dim X'_{s}=\dim_{S} X'$. So
\begin{equation*}
\operatorname{ed}_{S} f=\dim_{S} X'=\dim_{T} X'_{T}\ge
\operatorname{ed}_{T} f_{T}.\qedhere
\end{equation*}
\end{proof}

\begin{defi}
Let $f:Y\to X$ be a $G$-torsor over $S$ with $X$ an object of
$\mathfrak{C}_{S}$. We will say that it is $S$-\textit{classifying} if
for any $S$-dense open subscheme $U$ of $X$ and for any $G$-torsor
$f':Y'\to X'$ over $S$, where $X'$ is an object of $\mathfrak{C}_{S}$
with positive $S$-dimension, then $f_{U}:Y_{U}\to U$ is a weak
compression of $f'$.
\end{defi}

\begin{lemm}
The compression of an $S$-classifying $G$-torsor is $S$-classifying.
\end{lemm}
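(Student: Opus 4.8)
The compression of an $S$-classifying $G$-torsor is $S$-classifying.

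The plan is as follows. Let $f: Y \to X$ be an $S$-classifying $G$-torsor, and let $f_1 : Y_1 \to X_1$ be an $S$-compression of $f$, with $X, X_1$ objects of $\fC_S$. I want to show that $f_1$ is $S$-classifying, i.e. that for any $S$-dense open $U_1 \subseteq X_1$ and any $G$-torsor $f' : Y' \to X'$ over $S$ with $X'$ an object of $\fC_S$ of positive $S$-dimension, the restriction $(f_1)_{U_1} : (Y_1)_{U_1} \to U_1$ is a weak $S$-compression of $f'$.

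First I would unwind the definition of $S$-compression: there is a commutative diagram of $S$-rational maps
$$
\xymatrix{Y\ar@{-->}^{g}[r]\ar^{f}[d]& Y_1\ar^{f_1}[d]\\ X\ar@{-->}^{h}[r]& X_1}
$$
with $g$ being $G$-equivariant and $S$-dominant. Now fix an $S$-dense open $U_1 \subseteq X_1$ and a $G$-torsor $f' : Y' \to X'$ with $X'$ in $\fC_S$ of positive $S$-dimension. Since $f$ is $S$-classifying, applying the definition to the $S$-dense open $h^{-1}(U_1) \subseteq X$ (which is $S$-dense because $h$ is $S$-dominant, hence the preimage of an $S$-dense open is $S$-dense — this uses that $h$ as an $S$-dominant rational map has $S$-dense preimages of $S$-dense opens, which follows as in the composition lemma for $S$-rational maps), we get that $f_{h^{-1}(U_1)}$ is a weak $S$-compression of $f'$. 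Then I would compose: $f_{h^{-1}(U_1)}$ is a weak $S$-compression of $f'$, and $(f_1)_{U_1}$ is — by the last part of Lemma \ref{lem:dominant compressions} applied to the $S$-compression $f_1$ of $f$ and the $S$-dense open $U_1$ of $X_1$ — a defined $S$-compression of $f_U$ for some $S$-dense open $U \subseteq X$; shrinking further if necessary so that $U \subseteq h^{-1}(U_1)$, I get that $(f_1)_{U_1}$ is a (defined) weak $S$-compression of $f_U$, which is itself (a restriction of) a weak $S$-compression of $f'$. Applying Lemma \ref{lem:composition of compressions}(i) (composition of a defined weak $S$-compression with a weak $S$-compression) yields that $(f_1)_{U_1}$ is a weak $S$-compression of $f'$, as required.

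The one point that needs care — and which I expect to be the main obstacle — is the bookkeeping of $S$-dense opens: one must check that restricting the classifying torsor $f$ to a suitable $S$-dense open inside $h^{-1}(U_1)$ still qualifies under the classifying property (it does, since the classifying property is stated for \emph{all} $S$-dense opens of $X$), and that the weak compression $f' \dashrightarrow f_U$ obtained by restriction remains a weak compression (restricting the target of a weak compression to an $S$-dense open for which the preimage along the relevant rational map is still $S$-dense gives again a weak compression — this is exactly the content of Lemma \ref{lem:dominant compressions} combined with Lemma \ref{lem:composition of compressions}(i)). Once these compatibilities are in place, the result follows formally by composing the two weak compressions.
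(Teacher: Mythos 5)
Your argument is correct and is essentially the paper's own proof: the last part of Lemma \ref{lem:dominant compressions} gives an $S$-dense open $U\subseteq X$ such that $(f_1)_{U_1}$ is a defined $S$-compression of $f_U$, the classifying property of $f$ applied to $U$ makes $f_U$ a weak $S$-compression of $f'$, and Lemma \ref{lem:composition of compressions}~(i) concludes. The detour through $h^{-1}(U_1)$ (and the attendant worry about restricting the target of a weak compression, which in general need not be dominant) is superfluous, since, as you yourself observe, the classifying property holds for \emph{every} $S$-dense open of $X$ and so can be applied directly to $U$.
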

\begin{proof}
Let $f:Y\to X$ be an $S$-classifying $G$-torsor, let $f'':Y''\to X''$
be a compression of $f$ and let $f':Y'\to X'$ be any $G$-torsor over
$S$ with $X'$ of positive $S$-dimension. We have to prove that for any
$S$-dense open subscheme $V$ of $X''$ then $f''_{V}$ is a weak
compression of $f'$. But, by \reftext{Lemma~\ref{lem:dominantcompressions}},
$f''_{V}$ is an $S$-defined everywhere compression of $f_{U}$ for some
$S$-dense open subscheme $U$ of $X$. Since $f$ is classifying then
$f_{U}$ is a weak compression of $f'$ and so, by \reftext{Lemma~\ref{lem:compositionofcompressions}} (i), we have that $f''_{V}$ is a
weak $S$-compression of $f'$.
\end{proof}

\begin{prop}
\label{prop:generictorsor}
The essential dimension of a group scheme $G$ over $S$ is equal to the
essential dimension of an $S$-classifying $G$-torsor, if it exists.
\end{prop}
\begin{proof}
Clearly the essential dimension of $G$ is greater than or equal to the
essential dimension of any $G$-torsor, in particular of an
$S$-classifying $G$-torsor. We will now prove that the essential
dimension of any $G$-torsor is at most the essential dimension of an
$S$-classifying $G$-torsor.

Let $f':Y'\to X'$ be a $G$-torsor and let $f:Y\to X$ be an
$S$-classifying $G$-torsor. Let $n$ be the essential dimension of
$f$. Moreover we can suppose that the $S$-dimension of $X'$ is strictly
positive otherwise there is nothing to prove. So let us consider a weak
$S$-compression $f'':Y''\to X''$ of $f$ such that $\dim_{S} X''=n$. By
\reftext{Lemma~\ref{lem:dominantcompressions}} there exists an $S$-dense open
subscheme $V$ of $X$ such that $f''$ is a defined everywhere weak
$S$-compression of $f_{V}$. Now $f$ is an $S$-classifying $G$-torsor so
$f_{V}$ is a weak $S$-compression of $f'$, then $f''$ is a weak
compression of $f'$ by \reftext{Lemma~\ref{lem:compositionofcompressions}} (i).
So the essential dimension of $f'$ is less than or equal to $n$.
\end{proof}

We now give a condition for the existence of an $S$-classifying torsor,
in the case $S$ is local.
\begin{prop}
\label{Proposition:linearaction}
Let us suppose $S$ is local. Let $G$ be a group scheme and let us
suppose that $G$ acts linearly on $\mathbb{A}^{n}_{S}$ and that there
exists an $S$-dense $G$-stable open subscheme $Y$ of $\mathbb{A}^{n}
_{S}$ such that we have an induced $G$-torsor $f:Y\to X$, with $X$ an
object of $\mathfrak{C}_{S}$. Then $f$ is an $S$-classifying $G$-torsor.
In particular the essential dimension of $G$ is finite and less than or
equal to $n-\dim_{S} G$.
\end{prop}

\begin{proof}
The last statement is clear.

Moreover we remark that since $Y$ is an open subscheme of an affine
space it is an object of $\mathfrak{C}_{S}$. The same is then true for
$X$ since $Y\to X$ is $S$-dominant (since faithfully flat). We have now
to prove that, for any $V$ open subscheme of $X$ faithfully flat over
$S$ and for any $G$-torsor $f':Y'\to X'$, with $X'$ an object of
$\mathfrak{C}_{S}$ of positive dimension, that $f_{V}$ is a weak
compression of $f'$. We can clearly suppose $V=X$.

We proceed exactly as in \cite[Theorem 4.1]{Me} where the result is
proved in the case of a field. We repeat the proof just to point out
where the hypothesis that $S$ is local is used.

Let us consider the $G\times_{S} G$-torsor $Y'\times_{S} Y\to X'
\times_{S} X$. If we quotient by the diagonal we have a $G$-torsor
$Y'\times Y\to Z$. Now we have that $Y'\times_{S} Y\to Y'$ is an open
subscheme of the trivial vector bundle $Y'\times_{S} \mathbb{A}^{n}
_{S}\to Y'$. Since $G$ acts linearly on $\mathbb{A}^{n}_{S}$ then we
have that $Y'\times_{S} \mathbb{A}^{n}_{S}\to Y'$ descends to a vector
bundle $W$ over $X'$ which contains $Z$ as an open subscheme. For any
point $x$ of $X'$ there exists an open subscheme $U'$ containing $x$
such that the vector bundle is trivial. Let us take $x$ in the preimage
of the closed point of $S$ under the morphism $\pi : X'\to S$. Since
$\pi (U')$ is open, $S$ is local and $\pi (U')$ contains the closed
point of $S$ then it is equal to $S$. So $U'$ intersects any fiber of
$X'$ over $S$. Since $X'$ has integral fibers we have that $U'$ is
$S$-dense. Then by the following Lemma we have a rational section~$s$,
defined over $U'$, of the vector bundle which factorizes through~$Z$.
Then we have finally, by \reftext{Lemma~\ref{lem:fiberproduct}}, a weak
$S$-compression given by
\begin{equation*}
\vcenter{\xymatrix{Y'\ar@{-->}^g[r]\ar^{f'}[d]& Y'\times_S Y \ar^{p_Y}[r]\ar[d]&
Y\ar^{f}[d]\\ X'\ar@{-->}^{s}[r]& Z \ar[r]& X}}.
\end{equation*}
So we are done using \reftext{Lemma~\ref{lem:compositionofcompressions}}(i).
\end{proof}

\begin{lemm}
Let us suppose $S$ local and $X\to S$ is a faithfully flat morphism
locally of finite presentation with integral fibers of positive
dimension. For any $S$-dense open subscheme $V$ of a linear affine space
$\mathbb{A}^{n}_{S}$ there exists an affine $S$-dense open subscheme
$U$ of $X$ with a morphism $U\to V$.
\end{lemm}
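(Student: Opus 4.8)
The plan is to reduce the statement to a concrete construction on a single affine chart of $X$, exploiting that $S$ is local in two ways: every prime of $R:=\Gamma(S,\cO_S)$ lies in the maximal ideal $\frm$ (with closed point $s_0$), and the only open of $S$ containing $s_0$ is $S$ itself. So first I would assume $S=\spec R$ with $R$ a noetherian local domain.

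\emph{Shrinking $V$ to a basic open.} Write $Z:=\AA^n_S\setminus V$ with its reduced structure and let $I\In R[t_1,\dots,t_n]$ be its ideal. Since $V$ is $S$-dense and $\AA^n_{k(s_0)}$ is reduced and integral, the closed subscheme $Z_{s_0}\subsetneq\AA^n_{k(s_0)}$, so $I\otimes_R k(s_0)\neq 0$; hence there is $f\in I$ with $f\notin\frm[t_1,\dots,t_n]$. Then $D(f):=\AA^n_S\setminus\{f=0\}\In V$, and since every prime $\frp$ of $R$ lies in $\frm$ we get $f\notin\frp[t_1,\dots,t_n]$, so $f_s\neq 0$ in $k(s)[t]$ for all $s\in S$; thus $D(f)$ is an $S$-dense open of $\AA^n_S$. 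It therefore suffices to produce an affine $S$-dense open $U\In X$ together with functions $g_1,\dots,g_n\in\Gamma(U,\cO_U)$ such that $f(g_1,\dots,g_n)$ is invertible, for then $(g_1,\dots,g_n)$ defines a morphism $U\to D(f)\In V$.

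\emph{A good chart and the explicit map.} Since $X\to S$ is faithfully flat it is surjective, so $X_{s_0}\neq\emptyset$; pick an affine open $U_0=\spec A$ of $X$ meeting $X_{s_0}$. As $X\to S$ is flat and of finite presentation it is open, so the image of $U_0$ in $S$ is an open containing $s_0$, hence all of $S$; thus $U_0$ meets every fibre, and as the fibres are integral, $U_0$ is $S$-dense, while $A$ is flat over $R$. Now because $X_{s_0}=\spec(A/\frm A)$ is integral of positive dimension over $k(s_0)$, the domain $A/\frm A$ contains an element transcendental over $k(s_0)$; lift it to $\lambda\in A$. Put $e:=\deg f$, $d_i:=(e+1)^i$, $g_i:=\lambda^{d_i}$ and $h:=f(g_1,\dots,g_n)\in A$. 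Modulo $\frm A$, $h$ is the image of the one-variable polynomial $f_{s_0}(t^{d_1},\dots,t^{d_n})\in k(s_0)[t]$ under $t\mapsto\bar\lambda$, which is injective as $\bar\lambda$ is transcendental; and $f_{s_0}(t^{d_1},\dots,t^{d_n})\neq 0$ because the distinct monomials of $f_{s_0}$ (each of degree $\le e$) are sent to distinct powers of $t$, by uniqueness of base-$(e+1)$ expansions. Hence $h\notin\frm A$, so $h\notin\frp A$ for every prime $\frp$ of $R$, and using flatness of $A$ over $R$ this gives $h_s\neq 0$ in $A\otimes_R k(s)$ for all $s\in S$. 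Therefore $U:=\spec A_h$ is affine, open in $X$, and $S$-dense, and $(g_1,\dots,g_n)$ pulls $f$ back to the unit $h$, so it factors through $D(f)\In V$ as required.

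\emph{Main obstacle.} The difficulty is not a single estimate but organizing the fibrewise statements over all of $S$ at once. The naive idea — spreading out the classical field statement from the generic fibre — fails, because any open of the local scheme $S$ other than $S$ omits $s_0$, so one would lose $S$-density along the closed fibre. The key observations that make it work are that over a local ring ``nonvanishing modulo $\frm$'' already forces ``nonvanishing modulo every prime'' (reducing everything to the closed fibre), and that since the residue field $k(s_0)$ may be finite one cannot simply substitute a rational point for the variables of $f$, which is why the monomial substitution $t_i\mapsto\lambda^{(e+1)^i}$ is used.
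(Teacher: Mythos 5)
Your proof is correct and follows essentially the same route as the paper's: take an affine chart $\spec A$ of $X$ meeting the closed fibre (which, since $S$ is local and the map is open with integral fibres, is automatically $S$-dense and maps onto $S$), shrink $V$ to a principal open $D(f)$ with $f$ having a coefficient outside $\frm$, find elements of $A$ at which $f$ is nonzero modulo $\frm$, and localize at the resulting $h$. The only deviation is in how those elements are produced: where you argue explicitly via the Kronecker substitution $t_i\mapsto\lambda^{(e+1)^i}$ at a lift of a transcendental element of $A/\frm A$, the paper simply observes that $A/\frm A$ is an infinite domain (the closed fibre being integral of positive dimension) so the nonzero polynomial $f \bmod \frm$ cannot vanish at all of its points — a minor variation of the same step.
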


\begin{proof}
We can suppose that $X$ is affine taking an affine open subscheme with
non-empty fiber over the closed point of $S$. Since $S$ is local, as in
the proof of the Proposition, one proves that this open subscheme maps
onto $S$ and so, since fibers of $X\to S$ are integral, it is an
$S$-dense open subscheme of $X$. In particular it is faithfully flat
over~$S$. Moreover we can assume that $V$ is a principal open. So let
$X=\operatorname{Spec}(A)$ and $V=\operatorname{Spec}(R[T_{1},\dots ,T
_{n}]_{f})$ with $f\in R[T_{1},\dots ,T_{n}]$ but with at least one
invertible coefficient. Let $\mathfrak{m}$ be the maximal ideal of $R$.
Since $A/\mathfrak{m} A$ is infinite then there exist $a_{1}, \dots
,a_{n}\in A$ such that $h:=f(a_{1},\dots ,a_{n})$ is nonzero modulo
$\mathfrak{m}$. So, $T_{i}\mapsto a_{i}$ for $i=1,\dots , n$, gives a
morphism $\operatorname{Spec}(A_{h})\to V$ and $
\operatorname{Spec}(A_{h})$ is faithfully flat over $S$.
\end{proof}

The following result generalizes \cite[Remark 4.12]{BF}, which works
over a field.
\begin{coro}
\label{cor:classyfingtorsor}
Let us suppose $S$ local. If
\begin{enumerate}[(ii)]
\item[(i)] $G$ is a closed subgroup scheme of
    $\operatorname{GL}_{n,S}$  and
\item[(ii)] there exists an $S$-dense open subscheme $U$ of
    $\operatorname{GL} _{n,S}$ such that the schematic quotient
    $U/G$ exists and $U\to U/G$ is a $G$-torsor,
\end{enumerate}
then $U\to U/G$ is an $S$-classifying $G$-torsor. In particular the
essential dimension of $G$ is finite and less than or equal to
$n^{2}-\dim_{S} G$.
\end{coro}

\begin{proof}
Let us consider $\operatorname{GL}_{n,S}$ contained in $\mathbb{A}
^{n^{2}}_{S}$. If we view $\mathbb{A}^{n^{2}}_{S}$ as the scheme which
represents the functor of square matrices of order $n$ then
$\operatorname{GL}_{n_{S}}$ acts on it by multiplication on the right.
In fact it acts freely, i.e. it acts freely on $T$-points, with $T$ any
$S$-scheme. Now by condition $(\mathit{ii})$, $U\to U/G$ is a $G$-torsor. It is
easy to verify that $U/G$ is an object of $\mathfrak{C}_{S}$. So by the
above proposition it is a classifying $G$-torsor.
\end{proof}

\begin{rema}
\label{rem:existencestandardtorsor}
We recall that any affine flat group scheme of finite type over an
affine regular noetherian scheme $T$ of dimension ${\le} 2$ is isomorphic
to a closed subgroup scheme of $\operatorname{GL}_{n,T}$ for some
$n$ (see \cite[Expos\'{e} VI, Proposition 13.2]{SGA3_I}). So any group
scheme (recall our conventions at the beginning of the section) over a regular noetherian local scheme of dimension ${\le} 2$
satisfies the first condition. Moreover condition $(i)$ is always
satisfied for finite  group schemes over $S$. In fact the usual
proof which works for fields (see for instance \cite[\S 3.4]{Wat}) works
also for finite group schemes of finite presentation over $S$.
The main point is to find a finite free $\mathcal{O}_{S}$-module $M$
where $G$ acts faithfully, i.e. the morphism of sheaves $G\to
\operatorname{GL}_{n}(M)$ is injective. But in fact firstly one takes
$\mathcal{O}_{G}$. Since $G$ is flat and of finite presentation over
$S$ then $\mathcal{O}_{G}$ is projective as $\mathcal{O}_{S}$-module
\cite[\S 1.5, Corollaire]{Bou_A_X}. So it is a direct factor of a
\textit{finite} (since it is finitely generated) free $\mathcal{O}
_{S}$-module $M$. Then $G$ acts faithfully on $M$.

We remark that, supposing the first condition is verified, the second
one is satisfied if $S$ is of dimension ${\le} 1$. This follows from
\cite[Theorem 4.C and Theorem 3.1.1]{An}. In fact one can take
$U=\operatorname{GL}_{n,S}$. In general, if $(i)$ is satisfied, there
exists always an open subscheme $U$ with that property, by
\cite[Expos\'{e}~V, Th\'{e}or\`{e}me~10.4.2]{SGA3_I}, since the action
of $G$ over $\operatorname{GL}_{n,S}$ is free. But we do not know if we
can take it $S$-dense in general.
\end{rema}

\begin{defi}
We will call a \textit{standard} torsor any torsor which satisfies
conditions of Corollary (even if the base is not local).
\end{defi}

Since the hypotheses of \reftext{Corollary~\ref{cor:classyfingtorsor}} are stable
by base change then if $G$ has a standard torsor over $S$ then it has
a standard torsor, by base change, over $T$ for any morphism
$T\to S$.

\begin{coro}
\label{coro:edandbasechange}
Let us suppose $S$ local and let us suppose there exists a standard
$G$-torsor over $S$. Let $T\to S$ be a morphism of schemes with $T$
local, integral and noetherian. Then $\operatorname{ed}_{T} G_{T}
\le \operatorname{ed}_{S} G$.
\end{coro}
\begin{rema}
If $S$ and $T$ are spectra of fields this corollary is exactly
\cite[Proposition~1.5]{BF}, even if the proof is different. The above
corollary can be applied, for instance, with $T$ a point of $S$.
\end{rema}

\begin{proof}
Since the pull-back of a standard torsor is a standard torsor then the
result follows by \reftext{Lemma~\ref{lem:basechange}} and
\reftext{Proposition~\ref{prop:generictorsor}}.
\end{proof}

We obtain the following result.
\begin{coro}
\label{coro:edsufibraspeciale}
Let $k$ be a field, let $G$ be a $k$-group scheme and let us suppose
that $S$ is a local $k$-scheme. For any point $x$ of $S$ with residue
field $k(x)$
\begin{equation*}
\operatorname{ed}_{k} G\ge \operatorname{ed}_{S} G_{S}\ge
\operatorname{ed}_{k(x)} G_{k(x)}.
\end{equation*}
In particular if the closed point is $k$-rational then
\begin{equation*}
\operatorname{ed}_{k} G= \operatorname{ed}_{S} G_{S}.
\end{equation*}
\end{coro}
\begin{proof}
This follows by the above corollary, since $G_{S}\times_{S}
\operatorname{Spec}(k(x))\simeq G_{k(x)}$. For the last part, since
$x$ is $k$-rational, then $k(x)=k$ and $G_{k(x)}= G$.
\end{proof}

We finally prove that if $S$ is a field we recover the usual definition
of essential dimension.

\begin{prop}
\label{Prop:edoverafield}
If $k$ is a field then \reftext{Definition~\ref{def:essentialdimension}} is
equivalent to the usual definition of essential dimension of a group
scheme (\reftext{Definition~\ref{def:edusuale}}).
\end{prop}
\begin{proof}
In \cite[Corollary 6.16 and Lemma 6.11]{BF} it is proved that the
essential dimension of $G$ (as in \reftext{Definition~\ref{def:edusuale}}) is
the dimension of $X'$ where $Y'\to X'$ is a $k$-compression of
$Y\to X$ and $Y\to X$ is a classifying $G$-torsor. By
\cite[Remark 4.12]{BF} we can suppose it is standard. We remark that in
\cite{BF} (see also \reftext{Definition~\ref{def:classicalessentialdimension}}) it is not assumed that $X'$
is an object of $\mathfrak{C}_{k}$. So, a priori, it could be not
geometrically integral. But since $Y$ is an open subscheme of an affine
space it is geometrically integral. The same is then true for $X$ since
$Y\to X$ is schematically dominant (since faithfully flat). Finally
since also $X\dashrightarrow X'$ is schematically dominant, being
dominant between reduced schemes, then also $X'$ is geometrically
integral. We also remark that in \reftext{Definition~\ref{def:essentialdimension}} we admit $k$-weak compressions. But this
is not at all a problem, since over a field, as explained after
\reftext{Definition~\ref{def:essentialdimension}}, we can suppose to work with
$k$-compressions.
\end{proof}

We have this unsurprising result.

\begin{lemm}
\label{lemm:edsottogruppi}
Let us suppose $S$ is local. Let $H$ be a closed (faithfully flat) $S$-subgroup
scheme of a group scheme $G$ over $S$ and suppose that $G$ has a
standard torsor. Then
\begin{equation*}
\operatorname{ed}_{S} H+\dim_{S} H\le \operatorname{ed}_{S} G +\dim
_{S} G.
\end{equation*}
\end{lemm}
\begin{proof}
The proof follows \cite[Theorem 6.19]{BF}, which gives the result over
a field.

Take a standard $G$-torsor $f:U\to X$. By \reftext{Proposition~\ref{prop:generictorsor}} and \reftext{Corollary~\ref{cor:classyfingtorsor}} the
essential dimension of $G$ over $S$ is equal to the essential dimension
of this torsor. We remark that $g:U\to U/H$ is a standard classifying
$H$-torsor. Now let $f':U'\to X'$ be an $S$-weak compression of $f$ such
that $\operatorname{ed}_{S} G=\dim_{S} X'$. Then $g':U'\to U'/H$ is an
$S$-weak compression of $g$. Therefore
\begin{align*}
\operatorname{ed}_{S} H\le \dim_{S}(U'/H)
&=\dim_{S} U'-\dim_{S} H
\\
&=\dim_{S} G+\dim_{S} X'-\dim_{S} H
\\
&=\operatorname{ed}_{S} G+\dim_{S} G-\dim_{S} H
\end{align*}
and we are done.
\end{proof}

\section{Invariance of essential dimension by base change}\label{sec4}

In this section we see when the essential dimension over a field remains
invariant if we change base field. In the following $S$ is, as above,
an integral locally noetherian scheme. And see the beginning of section
\S \ref{sectio:compressions} for the assumptions on group schemes. We first prove the following
result.
\begin{theo}
\label{thm:semicontinuityed}
Let $G$ be a group scheme over $S$. Then there exists a non-empty open
subscheme $U$ of $S$ such that for any $s\in U$, with residue field
$k(s)$,
\begin{equation*}
\operatorname{ed}_{k(s)}(G_{k(s)})\le \operatorname{ed}_{k(S)}(G_{k(S)})=
\operatorname{ed}_{\mathcal{O}_{S,s}}G_{\mathcal{O}_{S,s}}.
\end{equation*}
\end{theo}
\begin{proof}
By \reftext{Lemma~\ref{lem:basechange}} we have just to prove that there exists
a non-empty open subscheme $U$ of $S$ such that, for any $s\in U$,
$\operatorname{ed}_{\mathcal{O}_{S,s}}(G_{\mathcal{O}_{S,s}})\le
\operatorname{ed}_{k(S)}(G_{k(S)})$. We know that $G_{k(S)}$ has a
closed immersion $\iota $ in some $\operatorname{GL}_{n,k(S)}$. We
observe that $\operatorname{Spec}(k(S))=\lim\limits_{\leftarrow }U$
where $U$ ranges through affine open subschemes of $S$. By
\cite[Expos\'{e}~VI Proposition~10.16]{SGA3_I} and
\cite[Th\'{e}or\`{e}me 8.8.10]{EGAIV3} there exists an affine open
subscheme $V$ of $S$ such that $\iota $ extends (uniquely up to restrict
the open subscheme) to a closed immersion $G_{V}\to \operatorname{GL}_{n,V}$. The
closed immersion $\iota $ gives a standard torsor $f:Y\to X$ over
$\operatorname{Spec}(k(S))$. By \cite[Th\'{e}or\`{e}me 8.8.2]{EGAIV3}
up to restrict $V$ we can suppose that $f$ extends to a morphism
$Y'\to X'$ over $V$. Again by \cite[Th\'{e}or\`{e}me 8.8.2]{EGAIV3} up
to restrict $V$ we can suppose that the $X$-action of $G_{k(S)}$ over
$Y$ extends to a $X'_{V}$-action of $G_{V}$ over $X'_{V}$. Finally, by
\cite[Expos\'{e} VI Proposition 10.16]{SGA3_I}, up to restrict again
$V$, we can suppose that $\tilde{f}:Y'_{V}\to Z'_{V}$ is a
$G_{V}$-torsor. We remark that it is a standard $G_{V}$-torsor
associated to the above extension of $\iota $.

Let $g:W\to Z$ be a $k(S)$-weak compression of $f_{k(S)}$ such that
$\dim_{k(S)}Z=\operatorname{ed}_{k(S)} G_{k(S)}$, which is possible
since $f_{k(S)}$ is classifying. Reasoning as above this $k(S)$-weak
compression extends to a weak $U$-compression of $\tilde{f}_{U}$, where
$U$ is an affine open subscheme contained in $V$. Let $s$ be a point of
$U$. If we pull-back to $\operatorname{Spec} (\mathcal{O}_{S,s})$ we
have a weak $\operatorname{Spec}(\mathcal{O}_{S,s})$-compression of the
standard $G_{\mathcal{O}_{X,x}}$-torsor $f_{\operatorname{Spec}(
\mathcal{O}_{S,s})}$. So we have that $\operatorname{ed}_{\mathcal{O}
_{S,s} }G_{\mathcal{O}_{S,s}}\le \operatorname{ed}_{k(S)}G_{k(S)}$ as
wanted.
\end{proof}

\begin{exam}
If $G$ is a group scheme over $\mathbb{Z}$ we have that
\begin{equation*}
\operatorname{ed}_{\mathbb{Q}} G_{\mathbb{Q}}\ge \operatorname{ed}
_{\mathbb{F}_{p}} G_{\mathbb{F}_{p}}
\end{equation*}
for all except possibly finitely many primes $p$.
\end{exam}

Now we state some corollaries. We recall that a group scheme over a
field $L$ is called almost special (as defined in \cite{TV}) if
$\operatorname{ed}_{L}G=\dim_{L} \Lie G -\dim_{L} G$. In
\cite[Theorem~1.2]{TV} has been proved that this is the minimal value
which can be obtained. For instance trigonalizable group schemes of
height ${\le} 1$ are almost special (see \cite[Corollary 4.5]{TV}). We recall that a
group scheme over a field of characteristic $p$ is of height ${\le} n$
if it is killed by $\operatorname{F}^{n}$ where $\operatorname{F}$ is
the Frobenius. We do not know examples of trigonalizable group schemes
not almost special even if we think that there are a lot of them. By
the previous Theorem it follows the following corollary.
\begin{coro}
\label{coro:quasispecialisopen}
Let $G$ be a group scheme over $S$ such that $G_{k(S)}$ is almost
special. Then $\operatorname{ed}_{k(S)} G_{k(S)}\le \operatorname{ed}
_{k(s)} G_{s}$ for any $s\in S$ and there exists a non-empty open
subscheme $U$ of $S$ such that, for any $s\in U$, $G_{s}$ is almost
special and $\operatorname{ed}_{k(s)} G_{s}=\operatorname{ed}_{k(S)} G
_{k(S)}$.
\end{coro}
\begin{proof}
We have $\operatorname{ed}_{k(S)}{G_{k(S)}}=\dim_{k(S)} \Lie G_{k(S)}
-\dim G_{k(S)}$. By \cite[Theorem~1.2]{TV}, $\operatorname{ed}_{k(s)}
{G_{s}}\ge \dim_{k(s)} \Lie G_{s} -\dim_{k(s)} G_{s}$ for any
$s\in S$. Moreover, the dimension of the tangent space is an upper
semicontinuous function. Then $\dim_{k(s)} \Lie G_{s}\ge \dim_{k(S)}\Lie G
_{k(S)}$ for any $s\in S$ and there exists a non-empty open subscheme
$V$ of $S$ such that $\dim_{k(s)} \Lie G_{s}= \dim_{k(S)}\Lie G_{k(S)}$.
So for any $s\in S$, $\operatorname{ed}_{k(s)} G_{s}\ge
\operatorname{ed}_{k(S)} G_{k(S)}$ and $\operatorname{ed}_{k(S)} G=
\dim_{k(s)} \Lie G_{s} -\dim G_{s}$ for any $s\in V$. Finally, by the
\reftext{Theorem~\ref{thm:semicontinuityed}} there exists a  non-empty open
subscheme $U\subseteq V$ of $S$ such that $\operatorname{ed}_{k(s)}G
_{s}\le \operatorname{ed}_{k(S)} G_{k(S)}$. Then, for any $s\in U$,
$G_{s}$ is almost special and $\operatorname{ed}_{k(s)} G_{s}=
\operatorname{ed}_{k(S)} G_{k(S)}$.
\end{proof}

We also have this corollary.

\begin{coro}
\label{Thm:homotopytheorem}
Let $k$ be a field, let $X$ be an integral scheme of finite type over
$k$ with fraction field $k(X)$ and let $G$ be a group scheme over
$k$. There exists a non-empty open subscheme $U$ of $X$ such that for
any $x\in U$, with residue field $k(x)$,
\begin{equation*}
\operatorname{ed}_{k(x)}(G_{k(x)})\le \operatorname{ed}_{k(X)}(G_{k(X)})=
\operatorname{ed}_{\mathcal{O}_{X,x}}G_{\mathcal{O}_{X,x}}.
\end{equation*}
In particular if the set of $k$-rational points of $X$ is Zariski dense
then
\begin{equation*}
\operatorname{ed}_{k}(G)=\operatorname{ed}_{k(X)}(G_{k(X)}).
\end{equation*}
\end{coro}
\begin{proof}
We have just to apply \reftext{Theorem~\ref{thm:semicontinuityed}} to
$G_{X}$ over $X$ and we are done. The last statement is clear using
\reftext{Corollary~\ref{coro:edsufibraspeciale}}.
\end{proof}

\begin{rema}
\begin{enumerate}[(ii)]
\item[(i)] The requirement about rational points is necessary. In fact, otherwise,
one can take a finite extension of fields $k\subseteq k'$ and a group
$G$ such that $\operatorname{ed}_{k} G>\operatorname{ed}_{k'}G$. However
we do not have a counterexample with $X$ geometrically integral over
$k$ of positive dimension. See also discussion before \reftext{Proposition~\ref{Proposition:finitefield}}.
\item[(ii)] If the set of $k$-rational points are dense then, in
    particular, there exists a Zariski-dense set of closed points
    $x$, given by $k$-rational points, such that
    $\operatorname{ed}_{k(x)}G=\operatorname{ed}_{k(X)}G$. One
    could ask if this works in general, i.e. does there exist a
    Zariski-dense set $C$ of closed points such that for any $x\in
    C$ we have
    $\operatorname{ed}_{k(x)}G_{k(x)}=\operatorname{ed}_{k(X)}G_{k(X)}$?
    The answer is no. In fact take a geometrically integral variety
    $X$ over $\mathbb{R}$ without $\mathbb{R}$-rational points,
    e.g. the affine curve over $\mathbb{R}$ defined by
    $x^{2}+y^{2}=-1$. Then, by \cite[Theorem~7.6]{BF}, we have that
    $\operatorname{ed}_{\mathbb{R}(X)} \mathbb{Z}/4
    \mathbb{Z}=\operatorname{ed}_{\mathbb{R}} \mathbb{Z}/4=2$ since
    $\mathbb{R}$ and $\mathbb{R}(X)$ have no square roots of $-1$.
    But any closed point of $X$ has $\mathbb{C}$ as residue field.
    And we have that $\operatorname{ed}_{\mathbb{C}}
    \mathbb{Z}/4\mathbb{Z} = \operatorname{ed}_{\mathbb{C}} \mu_{4}
    =1$.
\end{enumerate}
\end{rema}

We recall that a field $k$ is \textit{pseudo algebraically closed} if
any (non-empty) integral geometrically irreducible scheme $X$ of finite
type over $k$ has at least $k$-rational point. It is clear that this is
equivalent to say that for any such an $X$ the set of $k$-rational point
of $X$ is dense.
\begin{coro}
If $k$ is a pseudo algebraically closed field and $X$ an integral
geometrically irreducible scheme of finite type over $k$ with fraction
field $k(X)$ then
\begin{equation*}
\operatorname{ed}_{k}(G)=\operatorname{ed}_{k(X)}(G_{k(X)}),
\end{equation*}
for any group scheme $G$ over $k$.
\end{coro}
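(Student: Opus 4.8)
The plan is to reduce immediately to Corollary~\ref{Thm: homotopy theorem}. Indeed, the last assertion of that Corollary gives the desired equality $\ed_k(G)=\ed_{k(X)}(G_{k(X)})$ for \emph{every} group scheme $G$ over $k$ as soon as the set of $k$-rational points of $X$ is Zariski dense in $X$. So the entire task is to establish that density, and this is precisely the content of the pseudo algebraically closed hypothesis.

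First I would recall the defining property: a field $k$ is pseudo algebraically closed exactly when every nonempty geometrically irreducible $k$-scheme of finite type possesses a $k$-rational point. Next, I would check that every nonempty open subscheme $U\subseteq X$ is again of this type: $U$ is of finite type over $k$; it is integral because a nonempty open of an integral scheme is integral; and it is geometrically irreducible because $U_{\bar k}$ is a nonempty open of the irreducible scheme $X_{\bar k}$, hence irreducible. Therefore $U$ has a $k$-rational point. Since every nonempty open of $X$ meets $X(k)$, the set $X(k)$ is Zariski dense in $X$.

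Having established that $X(k)$ is dense, Corollary~\ref{Thm: homotopy theorem} applies directly and yields $\ed_k(G)=\ed_{k(X)}(G_{k(X)})$, as claimed.

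There is essentially no obstacle here: the argument is a two-line reduction, combining the last part of Corollary~\ref{Thm: homotopy theorem} with the elementary fact that under the PAC hypothesis the $k$-points of a geometrically irreducible variety are dense. The only mildly delicate point is making sure that this density statement is read off correctly, i.e.\ that nonempty open subschemes of $X$ remain geometrically irreducible and of finite type over $k$ so that the defining property of PAC fields can be invoked for each of them; this is routine.
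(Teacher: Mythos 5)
Your proof is correct and follows the paper's own argument: the paper likewise deduces the statement directly from the definition of pseudo algebraically closed (which gives density of $X(k)$, since every nonempty open of $X$ is again geometrically irreducible of finite type) together with the last part of Corollary~\ref{Thm: homotopy theorem}. You have merely spelled out the density verification in more detail than the paper does.
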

\begin{proof}
This follows just by definition of pseudo algebraically closed and by
the last part of the \reftext{Corollary~\ref{Thm:homotopytheorem}}.
\end{proof}

\begin{coro}
\label{Cor:dimensioneessenzialeminima}
Let $G$ be a group scheme over a field $k$. Then
\begin{equation*}
\operatorname{ed}_{\bar{k}} G_{\bar{k}} \le \operatorname{ed}_{K} G
_{K}
\end{equation*}
for any extension $K$ of $k$, where $\bar{k}$ is an algebraic closure
of $k$.
\end{coro}

\begin{proof}
We first restrict to \textit{finitely generated} extensions $K$ of
$k$. Let $X$ be an integral scheme of finite type over $k$ with fraction
field $K$. Then by \reftext{Corollary~\ref{Thm:homotopytheorem}} there exists
a closed point $x$ of $X$, since closed points are dense in $X$, such
that $\operatorname{ed}_{k(x)}G_{k(x)}\le \operatorname{ed}_{K}G_{K}$.
But since $x$ is closed then $k(x)$ is a finite extension of $k$, so
\begin{equation*}
\operatorname{ed}_{\bar{k}} G_{\bar{k}}\le \operatorname{ed}_{k(x)}G
_{k(x)}\le \operatorname{ed}_{K} G_{K}
\end{equation*}
Now the general case follows from the following lemma.
\end{proof}

\begin{lemm}
\label{lemm:possiamosupporrefingen}
Let $G$ be a group scheme over $k$ and $K$ an extension of $k$. Then
there exists a finitely generated extension $L$ of $k$ such that
\begin{equation*}
\operatorname{ed}_{K} G_{K}=\operatorname{ed}_{L}G_{L}.
\end{equation*}
\end{lemm}
\begin{proof}
Take a standard classifying $G$-torsor $f:Y\to X$. Then we have that
$f_{K}:Y_{K}\to X_{K}$ is a standard classifying $G_{L}$-torsor. Let
$Y'\to X'$ a compression of $f_{K}$ such that $\dim_{K} X'=
\operatorname{ed}_{K}G_{K}$. The $G_{K}$-torsor $Y'\to X'$ and the
compression are in fact defined over a finitely generated extension
$L$ of $k$. So we obtain a compression of $f_{L}: Y_{L}\to X_{L}$. This
means that $\operatorname{ed}_{L} G_{L}\le \dim_{K} X'=
\operatorname{ed}_{K} G_{K}$. Since we always have the opposite
inequality we are done.
\end{proof}

\begin{coro}
Let $G$ be a group scheme over a field $k$. Then the following are
equivalent:
\begin{enumerate}[(ii)]
\item[(i)]
\begin{equation*}
\operatorname{ed}_{k} G=\operatorname{ed}_{k'} G_{k'},
\end{equation*}
for any finite extension $k'$ of $k$.
\item[(ii)]
\begin{equation*}
\operatorname{ed}_{k}G=\operatorname{ed}_{K}G_{K},
\end{equation*}
for any extension $K$ of $k$.
\item[(iii)] $\operatorname{ed}_{k} G=\operatorname{ed}_{\bar{k}}
    G_{\bar{k}}$.
\end{enumerate}

\end{coro}
\begin{proof}
It is clear that $(\mathit{ii})\Rightarrow (\mathit{iii})$. Now $(\mathit{iii}) \Rightarrow (i)$
since any finite extension of $k$ is included in $\bar{k}$.

We finally prove $(i)\Rightarrow (ii)$. The proof is similar to that one
of \reftext{Corollary~\ref{Cor:dimensioneessenzialeminima}}. By \reftext{Lemma~\ref{lemm:possiamosupporrefingen}} we can suppose $K$ finitely
generated over $k$. Then $K$ is the fraction field of an integral
variety $X$. Then by \reftext{Corollary~\ref{Thm:homotopytheorem}} there exists
a closed point $x$ of $X$ such that $\operatorname{ed}_{k(x)}G_{k(x)}
\le \operatorname{ed}_{K}G_{K}\le \operatorname{ed}_{k}G$. But since
$x$ is closed then $k(x)$ is a finite extension of~$k$, so we are done.
\end{proof}

\begin{exam}
If $k$ is algebraically closed and $G$ is a group scheme over $k$ then
$\operatorname{ed}_{k} G=\operatorname{ed}_{K} G_{K}$ for any extension
$K$ of $k$. This is also proven in \cite[Proposition 2.14]{BRV}.
\end{exam}

As explained in the introduction it would be interesting to answer to
the following question.

\textit{(Q) if $X$ is an integral scheme of finite type over $k$ and
$ \operatorname{ed}_{k}(G)=\operatorname{ed}_{k(X)}(G_{k(X)}) $ for any
group scheme $G$, is the set of $k$-rational points of $X$
Zariski-dense?}

If the answer was positive (at least over number fields), and we are not
so optimistic, the Lang conjecture, i.e. the set of rational points of
varieties of general type over a number field is not Zariski-dense,
could be rephrased in terms of essential dimension, giving, possibly,
a new point of view. Namely the Lang conjecture would be rewritten as:

\textit{(L) if $X$ is a variety of general type over a number field
$k$ and with fraction field $K$ then there exists a group scheme $G$
over $k$ such that $\operatorname{ed}_{k}{G}\neq \operatorname{ed}
_{K}G_{K}$.}

Nevertheless we remark that the above statement (L) always implies,
using \reftext{Corollary~\ref{Thm:homotopytheorem}}, Lang Conjecture. Only the
converse is linked to the question (Q).

The positive answer to the question $(Q)$ for varieties over
$\mathbb{F}_{p}$ (generalizable to finite fields) is given essentially
in \cite[Prop 3.6 and Lemma 4.5]{NDT}. We remark that the set of
rational points of any positive dimensional variety over a finite field
is not Zariski dense. The following result is just a refinement of
\cite[Prop 3.6]{NDT}. The idea of the proof is the same, we just
slightly improved it to include, for instance, the essential dimension
of $(\mathbb{Z}/p\mathbb{Z})^{2}$ over $\mathbb{F}_{p}$.
\begin{prop}
\label{Proposition:finitefield}
Let $K$ be a field of characteristic $p$. Then
\begin{equation*}
\operatorname{ed}_{K}(\mathbb{Z}/p\mathbb{Z})^{r}= \left\{
\begin{array}{l@{\quad }l}
2 & \mbox{if $K$ is finite of order less than $p^{r}$}
\\
1 & \mbox{otherwise.}
\end{array}
\right.
\end{equation*}
In particular if $K$ is the function field of a positive dimensional
variety over $\mathbb{F}_{p}$ and $q=p^{r}$ then $\operatorname{ed}
_{\mathbb{F}_{q}}(\mathbb{Z}/p\mathbb{Z})^{s} > \operatorname{ed}_{K}(
\mathbb{Z}/p\mathbb{Z})^{s}$ if $s>r$.

\end{prop}

\begin{proof}
Suppose that $K$ is finite of order greater than or equal to
$p^{r}$. Then it contains an $\mathbb{F}_{p}$-vector space of dimension
$p^{r}$ and so we can embed $(\mathbb{Z}/p\mathbb{Z})^{r}$ in
$\mathbb{G}_{a,K}$, which gives $\operatorname{ed}_{K}((\mathbb{Z}/p
\mathbb{Z})^{r})\le \dim_{K}\mathbb{G}_{a,K}+\operatorname{ed}_{K}
\mathbb{G}_{a,K}=1$ (see \cite[Proposition 5]{Le1} for a more general
statement).

Now we suppose we are in the other situation. By \cite[Lemma 7.2]{BF},
we have that if $\operatorname{ed}_{K}(\mathbb{Z}/p\mathbb{Z})^{r}=1$
then $(\mathbb{Z}/p\mathbb{Z})^{r}$ is isomorphic to a subgroup of
$\operatorname{PGL}_{2}(K)$. Let $q$ be the order of $K$. It is easy to
see that $\operatorname{PGL}_{2}(K)$ has order $(q+1)(q^{2}-q)=q(q
^{2}-1)$. Therefore it has no $p$-subgroups of order greater than
$q$. Therefore $\operatorname{ed}_{K}(\mathbb{Z}/p\mathbb{Z})^{r}>1$.
On the other hand by \cite[Lemma 3.5]{NDT} we have that the essential
dimension of an elementary $p$-abelian group in characteristic $p$
should be less than or equal to $2$. So we are done.

The last part is clear.
\end{proof}

\section{Essential dimension over a discrete valuation ring}

In this section let $R$ be a discrete valuation ring with residue field
$k$ of characteristic $p>0$ and fraction field $K$. We set $S=
\operatorname{Spec}(R)$.
We recall that for group scheme over $S$ we will mean an affine faithfully flat group scheme of finite presentation over $S$. So in particular any group scheme will be automatically \textit{flat} over the base. If $G$ is a group scheme over $K$, a
\textit{model} of $G$ is a group scheme $\mathcal{G}$ over $S$ with
an isomorphism $\mathcal{G}\times_{S} K\to G$. If $G$ finite we require
$\mathcal{G}$ finite over $S$, if not differently specified. We observe
that any finite (flat) group scheme over $S$ has a standard $G$-torsor
(see \reftext{Remark~\ref{rem:existencestandardtorsor}}).

We know that $\mathbb{G}_{a}$ and $\mathbb{G}_{m}$ have essential
dimension zero over any field. We have the following result.

\begin{prop}
\label{prop:edmodelliGa}
A model of $\mathbb{G}_{m,K}$ or $\mathbb{G}_{a,K}$ has essential
dimension zero if and only if it is smooth with connected fibers.
\end{prop}
\begin{proof}
We first prove the \textit{if part}. It is known by
\cite[Theorem 2.2]{WW} that $\mathbb{G}_{a,S}$ is the unique smooth
model with connected fibers of $\mathbb{G}_{a,K}$ and by \cite[Theorem
2.5]{WW} that any smooth model with connected fibers of $\mathbb{G}
_{m,K}$ is isomorphic to
\begin{equation*}
\mathcal{G}^{\lambda }=\operatorname{Spec}(R[T,\frac{1}{1+\lambda T}])
\end{equation*}
for some $\lambda \in R\setminus \{0\}$, where the law group is the
unique one such that the morphism
\begin{equation*}
\mathcal{G}^{\lambda }\longrightarrow \mathbb{G}_{m,S}
\end{equation*}
given by $T\mapsto 1+\lambda T$ is a morphism of group schemes. This is
clearly an isomorphism on the generic fiber. These groups depend only
on the valuation of $\lambda $. We remark that if $\lambda =0$ then
$\mathcal{G}^{\lambda }\simeq \mathbb{G}_{a,S}$.

Now let $Y\to X$ be a $\mathbb{G}_{a,S}$-torsor with $X$ object of
$\mathfrak{C}_{S}$. It is well known that this torsor is locally trivial
in the Zariski topology. Here the important point in fact is that it is
trivial over an $S$-dense, using the argument already used in the proof
of \reftext{Proposition~\ref{Proposition:linearaction}}. Therefore the essential
dimension is zero.

Now, also a $\mathcal{G}^{\lambda }$-torsor, with $\lambda \neq 0$, is
locally trivial in the Zariski topology. This has been proved in
\cite[Proposition 2.3.1]{To}. So, as above, it is trivial over an
$S$-dense and therefore the essential dimension is zero.

Just to be precise in \cite[Proposition 2.3.1]{To} it has been proved
that the first group of cohomology of $\mathcal{G}^{\lambda }$ in the
\textit{small} fppf site is the same of the first group of cohomology
in the \textit{small} Zariski site over a scheme~$X$. Small fppf site
means that the category you are considering is that one
of \textit{flat of locally finite presentation} schemes over~$X$.
Similarly for the small Zariski site you are considering just Zariski
open sets of~$X$. But by \cite[III Proposition 3.1]{Mi} \footnote{In
fact in Milne's book all categories are supposed to have fiber
products. This is not the case for the category of flat schemes over
$X$. But however this is not necessary since one only needs that fiber
products with schemes involved in the coverings exist. So, for
instance, \cite[III Lemma 1.19]{Mi}, which is used in \cite[III
Proposition 3.1]{Mi} without be mentioned, holds also in the small fppf
site, and the proof is the same.} the cohomology is the same if you
take the small site or the big site (i.e. you consider the category of
all schemes locally of finite presentation over $X$)

The \textit{only if part} follows by the following Lemma.
\end{proof}

\begin{lemm}
Any group scheme over a noetherian integral scheme $T$ has
essential dimension greater than zero if one of its fibers is non-smooth
or not connected.
\end{lemm}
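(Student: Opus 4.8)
The plan is to prove the contrapositive: if $\ed_T G=0$, then every fibre of $G\to T$ is smooth and connected. The argument splits into a reduction step, producing from $\ed_T G=0$ the vanishing $\ed_{k(t)}G_{k(t)}=0$ for a given $t\in T$, and a field-theoretic step, namely that a group scheme over a field of essential dimension zero is smooth and connected.

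For the reduction, fix $t\in T$ and set $k=k(t)$. Localising at $t$ and, when $\dim\cO_{T,t}>1$, cutting $\cO_{T,t}$ down by a generic chain of hyperplane sections and normalising, I obtain a discrete valuation ring $R$ with a morphism $\spec R\to T$ sending the closed point to $t$ and residue field $k'$ a finite extension of $k$; since smoothness of a fibre and nontriviality of its $\pi_0$ are both stable under field extension, it suffices to show $G_{k'}$ is smooth and connected, so I may assume $T=\spec R$ with $R$ a discrete valuation ring of residue field $k$. (In the application of this lemma, Proposition \ref{prop: ed modelli Ga}, the base is already a discrete valuation ring and this step is vacuous.) Over $R$, $G$ has a standard, hence $S$-classifying, $G$-torsor $f\colon\GL_{n,R}\to\GL_{n,R}/G$ by Remark \ref{rem: existence standard torsor} and Corollary \ref{cor:classyfing torsor}, so $\ed_R G=\ed_R f$ by Proposition \ref{prop:generic torsor}. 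Pulling $f$ back along $\spec k\to\spec R$ and invoking Lemma \ref{lem:base change} gives $\ed_R f\ge\ed_k f_k$, while $f_k\colon\GL_{n,k}\to\GL_{n,k}/G_k$ is again a standard classifying torsor, so $\ed_k f_k=\ed_k G_k$; altogether $\ed_k G_k\le\ed_R G=\ed_T G=0$. (For a general noetherian integral $T$ one argues the same way as soon as $G$ admits a standard $G$-torsor over $T$, which holds under the hypotheses of Remark \ref{rem: existence standard torsor} --- e.g. $T$ regular of dimension at most $2$, or $G$ finite flat with $\dim T\le1$ --- since such a torsor base changes to a classifying $G_R$-torsor; this is the only point where a mild restriction on $T$ enters.)

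It remains to show that a group scheme $H$ over a field $k$ with $\ed_k H=0$ is smooth and connected. Smoothness follows from \cite[Theorem 1.1]{TV}, which gives $\ed_k H\ge\dim_k\Lie H-\dim_k H$, so $\ed_k H=0$ forces $\dim_k\Lie H=\dim_k H$, i.e. $H$ is smooth. For connectedness, Corollary \ref{coro: ed and base change} gives $\ed_{\bar k}H_{\bar k}\le\ed_k H=0$, so I may replace $k$ by $\bar k$; then $H_{\bar k}$ is smooth, and a standard classifying $H_{\bar k}$-torsor has essential dimension $0$, hence by Proposition \ref{prop:generic torsor} and the definition of weak compression it becomes trivial over a dense open of its base (a geometrically integral scheme of relative dimension $0$ over $\spec\bar k$ is $\spec\bar k$, over which every $H_{\bar k}$-torsor is trivial). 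By versality of the classifying torsor, every $H_{\bar k}$-torsor over every extension of $\bar k$ is trivial, that is, $H_{\bar k}$ is special; and a smooth special group over a separably closed field is connected. (Equivalently, run the fppf exact sequence $1\to H^0\to H\to\pi_0(H)\to1$ over $L=\bar k(t)$: since $\operatorname{cd}(L)\le1$ the set $\H^2(L,H^0)$ is trivial, so $\H^1(L,H)\to\H^1(L,\pi_0(H))$ is surjective, and triviality of $\H^1(L,H)$ forces $\pi_0(H)=1$.) Descending along $k'/k$ then shows $G_{k(t)}$, and hence the fibre $G_t$, is smooth and connected, which is the desired contrapositive.

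I expect the main obstacle to lie in the field-theoretic step, specifically the non-connected case, which rests on the vanishing of $\H^2$ of a connected linear group over the $C_1$-field $\bar k(t)$ (Steinberg's theorem / Serre's Conjecture I), or on the classification statement that special groups are connected; and, in the reduction, on confirming that a standard classifying $G$-torsor is genuinely available over the base (automatic over a discrete valuation ring by Remark \ref{rem: existence standard torsor}), so that Lemma \ref{lem:base change} and Proposition \ref{prop:generic torsor} can be chained to descend to the fibre.
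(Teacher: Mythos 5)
Your proof has the same skeleton as the paper's: reduce to a fibre via the standard/classifying-torsor base-change machinery, then over the residue field use \cite[Theorem 1.1]{TV} to rule out non-smoothness. Indeed the paper's proof is exactly two steps: Corollary \ref{coro: ed and base change} gives that the essential dimension over the base bounds the essential dimension over every fibre, and then \cite[Theorem 1.1]{TV} handles the non-smooth case while the \emph{proof} of \cite[Proposition 4.3]{TV} is cited for the statement that a (necessarily smooth) group scheme of essential dimension zero over a field is connected. Your reduction step is essentially this, phrased contrapositively; the detour through a discrete valuation ring obtained by ``generic hyperplane sections and normalisation'' is both sketchy and unnecessary, since $\spec(k(t))$ is itself local, integral and noetherian, so Corollary \ref{coro: ed and base change} applies directly once a standard torsor is available over the base --- which, as you correctly isolate, is the real restriction, and one the paper's own one-line invocation of Corollary \ref{coro: ed and base change} shares (it is harmless in the application, where the base is a discrete valuation ring and Remark \ref{rem: existence standard torsor} applies). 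Also note that your chain contains ``$\ed_R G=\ed_T G$'', which should only be an inequality $\le$, justified precisely by that standard-torsor hypothesis.

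The genuine divergence, and the soft spot, is the connectedness step. The paper outsources it to \cite[Proposition 4.3]{TV}; you instead reprove it via ``$\ed_{\bar k}H=0$ implies $H$ is special (trivial $\H^1$ over all field extensions), and a smooth special group over a separably closed field is connected,'' with the alternative argument that $\H^1(L,H)\to \H^1(L,\pi_0(H))$ is surjective for $L=\bar k(t)$ because the obstruction gerbe banded by (a twist of) $H^0$ is neutral when $\operatorname{cd}(L)\le 1$. As written this is not a clean citation: Serre's theorem that special groups are connected concerns Zariski-local triviality of torsors over arbitrary bases, not the field-valued specialness you have (the field-valued implication is essentially the content of \cite[Proposition 4.3]{TV} that you are trying to avoid); and Steinberg's theorem/Serre's Conjecture I, as well as the neutrality of the nonabelian $\H^2$, are delicate over the \emph{imperfect} field $\bar k(t)$ in characteristic $p$, which is exactly the situation this lemma is used in --- one must argue that the (possibly twisted) identity component has split unipotent radical because $H$ comes from the algebraically closed field $\bar k$, and then invoke results on reductive bands. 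This can be repaired (for instance by producing, from a finite-order element of $H(\bar k)$ with nontrivial image in $\pi_0(H)$, an explicit constant cocycle over a suitable cyclic extension of $\bar k(t)$ whose image in $\H^1(\bar k(t),\pi_0(H))$ is nontrivial, contradicting specialness), or simply by citing \cite[Proposition 4.3]{TV} as the paper does; but as it stands the connectedness argument rests on unverified heavy input rather than on a complete proof.
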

\begin{proof}
By \reftext{Corollary~\ref{coro:edandbasechange}} we have that the essential
dimension over $T$ is greater than or equal to the essential dimension
over any fiber. So we can conclude by \cite[Theorem~1.2]{TV}, for the
non-smooth case. Moreover in the proof of \cite[Proposition~4.3]{TV},
has been proved that a group scheme over a field with essential
dimension zero (so necessarily smooth) is connected. So we are done.
\end{proof}

\begin{lemm}
\label{lemm:modelssubgroupsGa}
Let us suppose that $K$ has characteristic $p>0$. Any model (not
necessarily finite) of a finite closed subgroup scheme of $\mathbb{G}
_{a,K}$ of order $p$ is isomorphic as $S$-group scheme to a closed
subgroup scheme of $\mathbb{G}_{a,R}$.
\end{lemm}
\begin{proof}
If models are finite this follows from more general statements about
classification of group schemes, like \cite{OT} (if $S$ is complete)
or \cite[Proposition 2.2]{DeJ}. \reftext{Lemma~\ref{lemm:modelssubgroupsGa}}
works however also for quasi-finite models. We give here a direct proof.
Let $G$ be a model of a finite subgroup scheme of $\mathbb{G}_{a,K}$.
Then there exists $x\in K[G]$ such that $\Delta (x)=x\otimes 1+1
\otimes x$, where $\Delta $ is the comultiplication. Then there exists
$a\in R$ such that $ax\in R[G]$. Since $\Delta (ax)=ax\otimes 1+1
\otimes ax$ the element $ax$ gives a morphism $G\to \mathbb{G}_{a,R}$.
Take the image of this morphism on the generic fiber and consider the
schematic closure $G'$, which is flat over $R$. So the induced morphism
$G\to G'$ is a model map, i.e. it is an isomorphism on the generic
fiber. Therefore, by \cite[Theorem 1.4]{WW}, it is a composition of
N\'{e}ron blow-ups (also called dilatations). Since $G$ has order
$p$, any blow-up is done over the trivial subgroup of the special fiber.
Now, since $G'\subseteq \mathbb{G}_{a,R}$, the first blow-up is
contained, by \cite[Proposition~2, \S 3.2]{BLR}, in the blow up of
$\mathbb{G}_{a,R}$ in the trivial subgroup scheme of the special fiber.
But, by the proof of \cite[Theorem 2.2]{WW}, this is isomorphic to
$\mathbb{G}_{a,R}$. Now continuing this process we have that $G$ is
isomorphic to a subgroup scheme of $\mathbb{G}_{a,R}$.
\end{proof}

In the proof of \reftext{Proposition~\ref{prop:edmodelliGa}} we recalled smooth
models of $\mathbb{G}_{m,K}$ with connected fibers, the so called
$\mathcal{G}^{\lambda }$, with $\lambda \in R\setminus \{0\}$. We
consider the isogeny $\mathcal{G}^{\lambda }\to \mathcal{G}^{\lambda
^{p}}$, with $v(p)\ge (p-1)v(\lambda )$ if $R$ has characteristic zero,
given by $T \mapsto ((1+\lambda T)^{p}-1)/p$. We note the kernel
$G_{\lambda ,1}$. It is a model of $\mu_{p,K}$, with natural isomorphism
on the generic fiber. In fact all models of $\mu_{p,K}$ are of this type
(see considerations just after \cite[Theorem 2.5]{WW}).

\begin{defi}
\label{df:filtered_gp_schemes}
For $i\in \{1,\dots ,n\}$ let $\lambda_{i}\in R\setminus \{0\}$.
\begin{enumerate}[(2)]
\item[(1)] A \emph{filtered $S$-group scheme of type
    $(\lambda_{1},\dots ,\lambda _{n})$} is a tuple
    $\mathcal{E}=(\mathcal{E}_{1},\dots ,\mathcal{E} _{n})$ of
    (affine) smooth commutative $S$-group schemes such that there
    exist exact sequences, with $1\le i \le n$ and
    $\mathcal{E}_{0}=0$:
\begin{equation*}
0\longrightarrow \mathcal{G}^{\lambda_{i}}\longrightarrow \mathcal{E}
_{i}\longrightarrow \mathcal{E}_{i-1}\longrightarrow 0.
\end{equation*}

\item[(2)] A \emph{Kummer group scheme of type $(\lambda_{1},\dots
    ,\lambda_{n})$} is a tuple $G=(G_{1},\dots ,G_{n})$ of finite
    (flat) commutative $S$-group schemes such that there exist a
    filtered $S$-group scheme $ \mathcal{E}=(\mathcal{E}_{1},\dots
    ,\mathcal{E}_{n})$ and commutative diagrams with exact rows,
    with $1\le i \le n$ and $G_{0}=0$:
\begin{equation*}
\begin{tikzpicture}[>=myto,scale=1.2,text height=1.5ex, text depth=0.5ex]
\node (01) at (0,-1) {$0$};
\node (A) at (1,-1) {$G_{\lambda_i,1}$};
\node (B) at (2.5,-1) {$G_i$};
\node (C) at (4,-1) {$G_{i-1}$};
\node (02) at (5,-1) {$0$};
\node (03) at (0,-2.2) {$0$};
\node (D) at (1,-2.2) {$\overset{\ }{\mathcal {G}^{\lambda_i}}$};
\node (E) at (2.5,-2.2) {$\mathcal {E}_i$};
\node (F) at (4,-2.2) {$\mathcal {E}_{i-1}$};
\node (04) at (5,-2.2) {$0$};
\draw[->,font=\scriptsize]
(01) edge (A)
(A) edge (B)
(B) edge (C)
(C) edge (02)
(03) edge (D)
(D) edge (E)
(E) edge (F)
(F) edge (04);
\path[right hook->] (A) edge (D);
\path[right hook->] (B) edge (E);
\path[right hook->](C) edge (F);
\end{tikzpicture}
\end{equation*}
\end{enumerate}
\end{defi}

We remark that if $G$ is a Kummer group scheme contained in a filtered
groups scheme $\mathcal{E}$ then $\mathcal{E}/G$ is filtered. Sometimes,
by abuse of notation, we will say that $\mathcal{E}_{n}$ (resp.
$G_{n}$), is a filtered $S$-group scheme (resp. Kummer group scheme).
We also stress the fact that, even if the generic fiber is
diagonalizable, almost always the special fiber is unipotent. 

These group schemes have been introduced and classified, at least in the
case the generic fiber is cyclic, in \cite{MRT1}. Conjecturally they
represent all models of finite diagonalizable $p$-group schemes. We have
the following result.
\begin{lemm}
\label{lemm:edKummer}
A filtered group scheme has essential dimension zero and a Kummer group
scheme of order $p^{n}$ has essential dimension less than or equal to
$n$.
\end{lemm}
\begin{proof}
The first statement follows by the fact that $\mathcal{G}^{\lambda
}$-torsors are locally Zariski trivial, as recalled in the proof of
\reftext{Proposition~\ref{prop:edmodelliGa}}. Using the filtration and
the long exact sequence of cohomological groups it is immediate to
prove that any torsor under a filtered group scheme is locally Zariski
trivial. So its essential dimension is zero.

The second statement is obtained using \reftext{Lemma~\ref{lemm:edsottogruppi}}.
\end{proof}

\begin{lemm}
\label{lem3}Let us suppose that $K$ has characteristic $p$. Let $G$ be a model (not
necessarily finite) over $S$ of a finite infinitesimal unipotent group
scheme over $K$. Then there exists a central decomposition series
\begin{equation*}
1 = G_{0} \subseteq G_{1} \subseteq \dots \subseteq G_{r} = G
\end{equation*}
of $G$ made by  group schemes over $S$, such that each successive
quotient $G_{i}/G_{i-1}$ is a subgroup scheme of ${\mathbb{G} _{
\mathrm{a} }}_{,S} $ of order $p$.
\end{lemm}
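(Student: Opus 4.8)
The plan is to descend the problem from $S$ to the generic fiber $K$, where the analogous statement is a classical fact about finite unipotent group schemes, and then lift back. First I would recall that over the field $K$ a finite unipotent commutative — or more generally nilpotent — group scheme $H$ admits a central composition series $1 = H_0 \subseteq H_1 \subseteq \dots \subseteq H_r = H$ whose successive quotients $H_i/H_{i-1}$ embed as closed subgroup schemes of $\ga_{,K}$; this is standard (for the commutative case it follows from the structure theory of finite unipotent group schemes, e.g.\ via the fact that a finite unipotent group scheme over a perfect field — or the splitting into connected and étale parts — is built from subgroups of $\ga$, and in general one uses the lower central series together with the fact that a unipotent group has a filtration with subquotients isomorphic to subgroups of $\ga$). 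If $K$ is not perfect one still has such a filtration since any nontrivial finite unipotent commutative $K$-group scheme has a nonzero homomorphism to $\ga_{,K}$, hence a nontrivial subgroup scheme embeddable in $\ga_{,K}$; quotienting and inducting gives the series. I should be slightly careful here whether the lemma's hypothesis gives commutativity of $G_K$: the statement says "unipotent group scheme", and in the relevant application these are commutative, but in any case nilpotence suffices for the argument, using the lower central series to reduce to the commutative case.

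Next, starting from such a filtration $1 = H_0 \subseteq \dots \subseteq H_r = G_K$ of the generic fiber, I would take schematic closures inside $G$: set $G_i$ to be the schematic closure of $H_i$ in $G$. Since $S$ is a discrete valuation ring (an integral, one-dimensional regular scheme) and $G$ is separated, each $G_i$ is a flat closed $S$-subgroup scheme of $G$ with generic fiber $H_i$ — flatness over a Dedekind base is automatic for schematic closures of subschemes of the generic fiber, and the schematic closure of a subgroup scheme is a subgroup scheme. This yields a chain $1 = G_0 \subseteq G_1 \subseteq \dots \subseteq G_r = G$ of flat $S$-group schemes. The centrality of the original series passes to the closures: the commutator morphism $G \times_S G_i \to G$ factors through $G_{i-1}$ after restricting to generic fibers, and since the target $G_{i-1}$ is closed and the source is $S$-flat (so generically dense), the factorization extends over all of $S$; likewise normality of $G_{i-1}$ in $G$ is preserved.

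The remaining point — and the one I expect to be the main obstacle — is to show each successive quotient $G_i/G_{i-1}$ is a closed subgroup scheme of $\ga_{,S}$, not merely a flat $S$-group scheme with generic fiber inside $\ga_{,K}$. Here I would invoke Lemma \ref{lemm: models subgroups Ga}: that lemma shows that any model over $S$ of a finite closed subgroup scheme of $\ga_{,K}$ of order $p$ embeds as a closed subgroup scheme of $\ga_{,R}$. To apply it I need the subquotients to have order $p$, which I can arrange by refining the generic-fiber filtration so that each $H_i/H_{i-1}$ has order $p$ (always possible for finite unipotent, hence $p$-power order, group schemes over a field — subgroups of $\ga_{,K}$ of order $p$ being the obvious building blocks, though one must check that a subgroup scheme of $\ga_{,K}$ of order $p^m$ itself admits such a refinement, which follows by induction since it is commutative unipotent). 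One subtlety is that $G_i/G_{i-1}$ is a priori only the quotient fppf sheaf; since $G_{i-1}$ is flat and closed in $G_i$ this quotient is representable by a flat $S$-group scheme with generic fiber $H_i/H_{i-1}$, so it is indeed a model of a subgroup scheme of $\ga_{,K}$ of order $p$, and Lemma \ref{lemm: models subgroups Ga} applies to give the desired embedding $G_i/G_{i-1} \hookrightarrow \ga_{,S}$. Assembling the refined chain completes the proof; the only genuinely delicate checks are the existence of the order-$p$ refinement on the generic fiber and the compatibility of quotients with schematic closure, both of which are standard over a Dedekind base.
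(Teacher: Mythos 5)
Your overall route is the same as the paper's: produce a central series of the generic fibre with successive quotients inside $\GG_{a,K}$ (this is the Demazure--Gabriel result the paper cites), take schematic closures over the discrete valuation ring to get a central series of $G$ by flat closed subgroup schemes, and then use Lemma \ref{lemm: models subgroups Ga} to embed the successive quotients into $\GG_{a,S}$. The closure part of your argument (flatness of schematic closures over a Dedekind base, persistence of centrality by schematic density of the generic fibre, representability and flatness of the quotients) is sound and is exactly what the paper does, albeit tacitly.

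The genuine gap is your refinement step. You claim the generic-fibre filtration can always be refined so that every quotient has order $p$, ``by induction since it is commutative unipotent''. This is false over the (typically imperfect, non separably closed) field $K$: a finite subgroup scheme of $\GG_{a,K}$ of order $p^m$ need not have any nontrivial proper subgroup scheme over $K$. For instance, with $K=\FF_p(t)$ (the fraction field of the DVR $\FF_p[t]_{(t)}$), the kernel of the additive polynomial $x^{p^2}-tx$ is an \'etale unipotent subgroup scheme of $\GG_{a,K}$ of order $p^2$ whose group of geometric points is $\FF_{p^2}\cdot\alpha$ with $\alpha^{p^2-1}=t$, and the Galois action permutes the $\FF_p$-lines without fixing any; hence it has no subgroup scheme of order $p$ at all. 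Such a group can itself be the generic fibre $G_K$, and then the order-$p$ refinement you need in order to quote Lemma \ref{lemm: models subgroups Ga} as stated simply does not exist, so your argument fails for this input. Note that the paper does not refine: it takes schematic closures of the Demazure--Gabriel filtration as it stands and appeals to Lemma \ref{lemm: models subgroups Ga} for the quotient models directly, i.e.\ it implicitly uses that lemma for models of finite subgroup schemes of $\GG_{a,K}$ of arbitrary order. So the tension you correctly sensed (the order-$p$ hypothesis in that lemma versus quotients of larger order) has to be resolved by extending the lemma (its N\'eron blow-up argument), not by refining the filtration over $K$, which is impossible in general.
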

\begin{proof}
We remark that the generic fiber is necessarily of order $p^{n}$. By
\cite[Proposition IV \S 2, 2.5]{DG} the result is true on the generic
fiber. Taking the schematic closure we obtain a central decomposition
series where the quotients are models of subgroups of
$\mathbb{G}_{a,K}$. By \reftext{Lemma~\ref{lemm:modelssubgroupsGa}} we are
done.
\end{proof}

\begin{lemm}
\label{lemm:unipotentcohomologyzero}
Let us suppose that $K$ has characteristic $p$. Let $G$ be a model (not
necessarily finite) over $S$ of a finite commutative unipotent
infinitesimal group scheme. Then for any affine scheme $X$ over $S$ we
have that $H^{j}(X,G)=0$ if $j\ge 2$.
\end{lemm}
\begin{proof}
By the previous lemma we are reduced to proving it in the case of a
closed subgroup scheme of $\mathbb{G}_{a,S}$ of order $p$. First of all
we remark that $\mathbb{G}_{a,S}/G$ is isomorphic to $\mathbb{G}_{a,S}$.
In fact for the fibers this follows by
\cite[IV, \S 2 Proposition 1.1]{DG}. Therefore $\mathbb{G}_{a,S}/G$ is
a smooth model of $\mathbb{G}_{a,K}$ with special fiber isomorphic to
$\mathbb{G}_{a,k}$, in particular connected. Then it isomorphic to
$\mathbb{G}_{a,S}$ by \cite[Theorem 2.2]{WW}. Now it is well known that
$H^{i}(X,\mathbb{G}_{a,S})=0$, if $i\ge 1$, so the wanted result easily
follows.
\end{proof}

\begin{lemm}
\label{lemm:epiincohomology}
Let $X=\operatorname{Spec}(A)$ be an affine faithfully flat scheme over
$S$. If $f:G_{1}\to G_{2}$ is an epimorphism of  $S$-group schemes
such that kernel has unipotent infinitesimal generic fiber and it is
central in $G_{1}$ then $H^{1}(X,G_{1})\to H^{1}(X,G_{2})$ is
surjective.
\end{lemm}
\begin{proof}

Let $P\to X$ be a $G_{2}$-torsor. By \cite[IV Proposition 2.5.8]{Gi},
the gerbe of all liftings of $P\to X$ is banded by the group scheme
$_{P} \ker f$, i.e. the group scheme obtained by $\ker f$ twisting by
$P\to X$ with $G_{2}$ acting by conjugation. Since $\ker f$ is central
in $G$ then $_{P} \ker f\simeq \ker f$. Again by \cite[IV Proposition
2.5.8]{Gi}, the torsor $P\to X$ is in the image of the map $H^{1}(X,G
_{1})\to H^{1}(X,G_{2})$ if and only if the above gerbe is trivial. But
gerbes over $Sch/X$ banded by a group $G$ are classified by
$H^{2}(X,G)$. Now the generic fiber of $\ker f$ is unipotent
infinitesimal so by \reftext{Lemma~\ref{lemm:unipotentcohomologyzero}} we have
that $H^{2}(X,\ker f)=0$. We have so proved that $H^{1}(X,G_{1})
\to H^{1}(X,G_{2})$ is surjective.
\end{proof}

Here we give a sort of generalization of \cite[Theorem 1.4]{TV} over a
discrete valuation ring for finite group schemes. We first prove the
following Lemma, which is in the counterpart of \cite[Lemma 3.4]{TV} in
a less general form.

\begin{lemm}
\label{lemm:edsuccessionekernelunipotent}
Let 
\begin{equation*}
1\longrightarrow G_{1}\longrightarrow G\longrightarrow G_{2}\longrightarrow
1
\end{equation*}
be an exact sequence of group schemes over $S$ such that $G_{1}$ is central in $G$ and the
generic fiber of $G_{1}$ is unipotent infinitesimal. Then
\begin{equation*}
\operatorname{ed}_{R}(G)\le \operatorname{ed}_{R}({G_{1}})+
\operatorname{ed}_{R}(G_{2}).
\end{equation*}
\end{lemm}

\begin{rema}
\label{rem:lemmadebole}
In fact this Lemma, over a field, is weaker than \cite[Lemma 3.4]{TV}
also in the finite case. The point here is that to have a similar
statement one should involve twisted forms of $G_{1}$, defined over a
scheme $Y$ of dimension maybe greater than $1$. We do not have a
decomposition for these group schemes and so we can not apply
d\'{e}vissage arguments to reduce to the case of subgroup schemes of
$\mathbb{G}_{a}$. Moreover we need that the generic fiber of
$G_{1}$ is infinitesimal otherwise we do not know if we can obtain a
filtration with quotients which are subgroups of $\mathbb{G}_{a,S}$.
More precisely one should prove that any model of a simple \'{e}tale
unipotent group scheme is contained in $\mathbb{G}_{a,S}$
\end{rema}
\begin{proof}
Let $f:P\to X$ be a $G$-torsor. Let us consider the $G_{2}$-torsor
$f_{1}:P_{1}=P/G_{1} \to X$. Then there exists a $G_{2}$-torsor
$f_{2}:P_{2}\to Y$ which is a weak $S$-compression of $f_{1}$ and such
that $\operatorname{ed}_{S} f_{2}=\dim_{S} Y$. Up to take the schematic
closure in $X$, since we are over a discrete valuation ring, we can
suppose that is an $S$-compression, not only weak. So $X\dashrightarrow
Y$ is $S$-dominant. Now by \reftext{Lemma~\ref{lemm:epiincohomology}} there
exists a $G$-torsor $g:Q\to Y$ such that $Q/G_{1}\to Y$ is isomorphic
to $f_{2}$ as $G_{2}$-torsors. If $U$ is an $S$-dense open subscheme
where $X\dashrightarrow Y$ is defined then $f_{U}:P_{U}\to U$ and
$g_{U}:Q_{U}\to U$ have the same image in $H^{1}(U,G_{2})$. Now, by
\cite[III, Proposition 3.4.5]{Gi}, we have that $H^{1}(U,G_{1})$ acts
transitively on the fibers of $H^{1}(U,G)\to H^{1}(U,G_{2})$. So let
$h_{1}:Z\to U$ be a $G_{1}$-torsor such that its class sends the class
of $g_{U}$ in the class of $f_{U}$. Let $h_{2}:Z'\to U'$ be an
$S$-compression of $h_{1}$ with $\dim_{S} U'=\operatorname{ed}_{S} h
_{1}$. Now take $[(h_{2})_{U'\times_{S} Y}]\cdot [g_{U'\times_{S} Y}]
\in H^{1}(U'\times_{S} Y,G)$, where $\cdot $ is the action of
$H^{1}(U\times_{S}Y,G_{1})$ over $H^{1}(U\times_{S} Y,G)$. The
associated torsor is a weak $S$-compression of $f$. So
\begin{equation*}
\operatorname{ed}_{S} f\le \dim_{S} U'+\dim_{S} Y= \operatorname{ed}
_{S} h_{1} +\operatorname{ed}_{S} f_{2} \le \operatorname{ed}_{S} G
_{1}+\operatorname{ed}_{S} G_{2}.\qedhere
\end{equation*}
\end{proof}

\begin{theo}
\label{theo:edminorepn}
Suppose that $R$ has characteristic $p$. Let $G$ be a finite group
scheme of order $p^{n}$ over $S$, such that
\begin{itemize}
\item[(i)] $G_{K}$ is the product of an unipotent infinitesimal group
scheme and diagonalizable group scheme;
\item[(ii)]the closure of the diagonalizable part is Kummer.
\end{itemize}
Then the essential dimension is less than or equal to $n$.
\end{theo}

\begin{rema}
The condition $(\mathit{ii})$ is conjecturally empty, i.e. any model of
a diagonalizable group is Kummer. For instance this is the case for
group scheme of order $p^{2}$ \cite{To2} and some evidences can be
found in \cite{MRT1}. After the proof we will comment on the
condition~$(i)$.
\end{rema}

\begin{proof}
If $G$ is a Kummer group scheme then we have the result by
\reftext{Lemma~\ref{lemm:edKummer}}. Now we suppose that the generic
fiber is infinitesimal unipotent. We prove the result by induction on
$n$. If $n=0$ it is clear. Now we suppose $n\ge 1$. By \cite[IV
Proposition~2.5]{DG} there exists a central subgroup of the generic
fiber of order $p$ and isomorphic to a subgroup of $ \mathbb{G}_{a,K}$.
Take the schematic closure $H$. It is isomorphic to a closed subgroup
scheme of $\mathbb{G}_{a,S}$ finite over $S$ by
\reftext{Lemma~\ref{lemm:modelssubgroupsGa}}. Moreover it is central in
$G$. Therefore its essential dimension is less than or equal to $1$ by
\reftext{Lemma~\ref{lemm:edsottogruppi}} and
\reftext{Proposition~\ref{prop:edmodelliGa}}. Now by
\reftext{Lemma~\ref{lemm:edsuccessionekernelunipotent}} we have that
\begin{equation*}
\operatorname{ed}_{S} G\le \operatorname{ed}_{S}H+\operatorname{ed}
_{S} G/H\le 1+\operatorname{ed}_{S}G/H.
\end{equation*}
So we can conclude by induction.

Now we consider the general case. We argue by induction, again. Let
$n\ge 1$. We suppose the statement is true for group schemes of order
$n-1$ and we prove it for $n$. We call $G_{u,K}$ and $G_{d,K}$,
respectively, the unipotent and the diagonalizable part of the generic
fiber. We can suppose that $G_{u,K}$ is nontrivial by what we proved
previously. By \cite[IV Proposition 2.5]{DG} there exists a central
subgroup of $G_{u,K}$ of order $p$ and isomorphic to a subgroup of
$\mathbb{G}_{a,K}$. Take the schematic closure $H$. It is isomorphic to
a closed subgroup scheme of $\mathbb{G}_{a,S}$ finite over
$S$ by \reftext{Lemma~\ref{lemm:modelssubgroupsGa}}. Therefore its essential
dimension is less than or equal to $1$ by \reftext{Lemma~\ref{lemm:edsottogruppi}} and \reftext{Proposition~\ref{prop:edmodelliGa}}.
Moreover $H$ is also central in $G$ since $G_{K}=G_{u,K}\times G_{d,K}$.
Now by \reftext{Lemma~\ref{lemm:edsuccessionekernelunipotent}} we have that
\begin{equation*}
\operatorname{ed}_{S} G\le \operatorname{ed}_{S}H+\operatorname{ed}
_{S} G/H\le 1+\operatorname{ed}_{S}G/H.
\end{equation*}
So we can conclude by induction.
\end{proof}

\begin{rema}
We observe that in \cite[Theorem 2.2]{TV} the above result is proved
over a field without the hypothesis $(i)$. The main point is that \reftext{Lemma~\ref{lemm:edsuccessionekernelunipotent}} is weaker than \cite[Lemma
3.4]{TV} as observed in the \reftext{Remark~\ref{rem:lemmadebole}}.
\end{rema}

We remark that in \reftext{Corollary~\ref{coro:quasispecialisopen}} we proved
that if $G$ is a model of an almost special group scheme over $K$ then
\begin{equation*}
\operatorname{ed}_{k} G_{k}\ge \operatorname{ed}_{K}{G_{K}}.
\end{equation*}
If $K$ has characteristic $p>0$, this applies, for instance, for models
of diagonalizable group schemes with finite part of order a power of
$p$, since smooth diagonalizable group schemes and diagonalizable
$p$-group schemes are almost special \cite[Example~4.4]{TV}. We remark
that in general the special fiber is not diagonalizable but unipotent.
If $G$ is a finite group scheme (flat) over $S$, with $S$ complete, and
the order of $G$ is not divisible by $p$ then $\operatorname{ed}_{K} G
_{K}\ge \operatorname{ed}_{k} G_{k}$ (see \cite[Theorem 5.11]{BRV1}).
There is equality if $K$ has characteristic~$p$ (see
\cite[Corollary~5.12]{BRV1}). In the examples after the next Corollary
we will see that for finite  $p$-group schemes over $S$ a general
result can not exist. Now we give a more precise result for models of
group schemes of height ${\le} 1$.

\begin{coro}
If $K$ is of positive characteristic and $G_{K}$ is a trigonalizable group scheme of
height ${\le} 1$, i.e. killed by Frobenius, and order $n$ then
\begin{equation*}
\operatorname{ed}_{k} G_{k}= \operatorname{ed}_{K}{G_{K}}=n.
\end{equation*}
If moreover $G_{K}$ satisfies conditions of \reftext{Theorem~\ref{theo:edminorepn}} then both quantities are equal to $
\operatorname{ed}_{S} G$.
\end{coro}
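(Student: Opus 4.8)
The plan is to deduce the three equalities from results already in place, so the proof is essentially bookkeeping. Write $|G_K|=p^n$ (reading ``order $n$'' as order $p^n$). I would begin by recording the standard fact that a finite group scheme $H$ over a field $L$ of characteristic $p$ killed by the relative Frobenius has coordinate ring a truncated polynomial algebra $L[x_1,\dots,x_r]/(x_1^p,\dots,x_r^p)$ with $r=\dim_L(\mathfrak{m}/\mathfrak{m}^2)=\dim_L\Lie H$; hence $|H|=p^r$, so a height $\le 1$ group scheme of order $p^m$ satisfies $\dim_L\Lie H=m$.

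Step one is $\ed_K G_K=n$. Since $G_K$ is finite, $\dim_K G_K=0$ and, by the preliminary fact, $\dim_K\Lie G_K=n$; hence \cite[Theorem 1.1]{TV} gives $\ed_K G_K\ge\dim_K\Lie G_K-\dim_K G_K=n$. For the reverse inequality I would invoke \cite[Theorem 1.2]{TV}: a height $\le 1$ group scheme is, after base change to the perfect closure and descent, a product of its multiplicative and unipotent infinitesimal parts, hence an extension of a diagonalizable by a unipotent group scheme, so that result yields $\ed_K G_K\le n$. Combining gives $\ed_K G_K=n$.

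Step two is $\ed_k G_k=n$, for which I would check that the special fibre is again a height $\le 1$ group scheme of order $p^n$ and then repeat Step one over $k$. The order is constant since $G\to S$ is finite flat over the connected scheme $S$. For the height, note that the two $S$-morphisms $F_{G/S}\colon G\to G^{(p)}$ and $e\circ\pi$ (the identity section composed with the structure morphism) agree on the generic fibre $G_K$ by hypothesis; as $G\to S$ is flat over the integral base $S$ the generic fibre is schematically dense in $G$, and $G^{(p)}\to S$ is separated, so the closed locus in $G$ where the two morphisms coincide is all of $G$. Hence $F_{G/S}=0$, in particular $F_{G_k/k}=0$, so $G_k$ has height $\le 1$ and $\dim_k\Lie G_k=n$; Step one applied over $k$ then gives $\ed_k G_k=n$.

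Step three assumes in addition the hypotheses (i) and (ii) of Theorem \ref{theo:ed minore pn}, which give directly $\ed_S G\le n$. Conversely $G$ has a standard torsor over $S$ by Remark \ref{rem: existence standard torsor}, so Corollary \ref{coro: ed and base change} applied to the generic point $\spec K\to S$ gives $n=\ed_K G_K\le\ed_S G$; hence $\ed_S G=n=\ed_K G_K=\ed_k G_k$. The only genuinely delicate point is the upper bound $\ed_K G_K\le n$ over the possibly imperfect field $K$: the rest is base change together with the structure theory of height $\le 1$ group schemes, whereas this inequality relies on the d\'evissage of \cite[Theorem 1.2]{TV} through $\ga$- and $\mu_p$-subquotients, and one must make sure that machinery covers $G_K$ in the required generality.
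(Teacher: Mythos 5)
Your overall route is the paper's: you check that $G$ itself, hence $G_k$, is killed by Frobenius (your schematic-density argument just spells out what the paper dismisses as clear), you compute the essential dimension of the two fibres over their residue fields, and for the last assertion you combine Theorem \ref{theo:ed minore pn} with Corollary \ref{coro: ed and base change} exactly as the paper does. The genuine gap is the upper bound $\ed_K G_K\le n$ (and its repetition over $k$) in your Step one. The paper does not reprove this field-level statement: it appeals to the proof of \cite[Corollary 4.5]{TV} for precisely the fact that a height $\le 1$ group scheme of order $p^n$ over a field has essential dimension $n=\dim\Lie$. Your substitute --- pass to the perfect closure, split the group as multiplicative times unipotent, ``descend'', and apply \cite[Theorem 1.2]{TV} --- fails at several points. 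The first sentence of the corollary does not assume commutativity, and a noncommutative height $\le 1$ group scheme (e.g.\ the Frobenius kernel of $\mathrm{SL}_2$) admits no multiplicative-by-unipotent decomposition at all, so \cite[Theorem 1.2]{TV} does not apply to it. Even for commutative $G_K$ the splitting is available only over a perfect field, and nothing useful comes back from $K^{\mathrm{perf}}$ to $K$: essential dimension can only decrease under extension of the base field, so a bound over the perfection gives no bound over $K$, and neither the splitting nor the diagonalizability of the multiplicative part (a priori only a form of $\mu_p^r$) descends to $K$.

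So as written Step one does not establish $\ed_K G_K=n$; you must either quote \cite[Corollary 4.5]{TV} as the paper does (this is the one external input the statement really rests on), or supply an argument valid over the possibly imperfect, non-closed field $K$. Your closing caveat points at the right weak spot but leaves it open. Note that for the second sentence alone your argument is repairable: hypothesis (i) of Theorem \ref{theo:ed minore pn} makes $G_K$ a product of a unipotent and a diagonalizable group scheme, so \cite[Theorem 1.2]{TV} applies directly over $K$, and over $k$ one can instead use $\ed_k G_k\le \ed_S G\le n$ from Corollary \ref{coro: ed and base change}; but the first sentence, for a general height $\le 1$ group scheme, is exactly what your argument does not reach.
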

\begin{proof}
If $G_{K}$ is killed by Frobenius then also $G$ is clearly killed by
Frobenius. So the special fiber is of height ${\le} 1$. Therefore the
essential dimension of fibers is equal to $n$, by the proof of
\cite[Corollary 4.5]{TV}. Finally if we can apply \reftext{Theorem~\ref{theo:edminorepn}} then $\operatorname{ed}_{S} G\le n$, where
$n$ is order of $G$ and we are done, using \reftext{Corollary~\ref{coro:edandbasechange}}.
\end{proof}

\begin{exam}
We give here examples which show that in general everything can happen
if $G$ is a finite $p$-group scheme over $S$.
\begin{itemize}
\item[(i)] Let us suppose that the characteristic of $K$ is zero and
$K$ contains a primitive $p^{2}$-th root of unity. Then $(\mathbb{Z}/p
^{2}\mathbb{Z})_{K}\simeq \mu_{p^{2},K}$ so its essential dimension is
$1$. On the other hand $\operatorname{ed}_{k}(\mathbb{Z}/p^{2}
\mathbb{Z})_{k}=2$ by \cite[Proposition 7.10]{BF}. So
\begin{equation*}
\operatorname{ed}_{k}(\mathbb{Z}/p^{2}\mathbb{Z})_{k}>
\operatorname{ed}_{K}(\mathbb{Z}/p^{2}\mathbb{Z})_{K}
\end{equation*}

Moreover $(\mathbb{Z}/p\mathbb{Z})^{2}_{K}\simeq \mu_{p,K}^{2}$ (just
a primitive $p$-th root is needed). Therefore its essential dimension
is $2$ by \cite[Corollary 3.9]{BF}. On the other hand, if $k$ infinite,
$\operatorname{ed}_{k}(\mathbb{Z}/p\mathbb{Z})^{2}_{k}=1$ (see
\reftext{Proposition~\ref{Proposition:finitefield}}). So, if $k$ is infinite,
\begin{equation*}
\operatorname{ed}_{K}(\mathbb{Z}/p\mathbb{Z})^{2}_{K}>
\operatorname{ed}_{k}(\mathbb{Z}/p\mathbb{Z})^{2}_{k}
\end{equation*}
\item[(ii)] Let us suppose that the characteristic of $K$ is $p$. As
noted above if $G_{K}$ is diagonalizable then $\operatorname{ed}_{k} G
_{k} \ge \operatorname{ed}_{K} G_{K}$. Strict inequality can happen.

Now take $G=\operatorname{Spec}(A)$ with $A=R[T_{1},T_{2}]/(T_{1}^{p}-T
_{1},T_{2}^{p}-\pi^{(p-1)}T_{2})$, with $\pi$ an uniformizer of $R$.
We define the multiplication by
\begin{align*}
T_{1}
&\mapsto T_{1}\otimes 1+1\otimes T_{1}
\\
T_{2}
&\mapsto T_{1}\otimes 1+1\otimes T_{1}+\pi \sum_{i=1}^{p-1} \frac{
\binom{p}{i}}{p}T_{2}^{i}\otimes T_{2}^{p-i}
\end{align*}
This group schemes is such that its N\'{e}ron blow-up in the subgroup
$(\mathbb{Z}/p\mathbb{Z})_{k}$ of the special fiber is isomorphic to
$(\mathbb{Z}/p^{2}\mathbb{Z})_{R}$. One can easily see that
$G_{k}\simeq \alpha_{p,k}\times (\mathbb{Z}/p\mathbb{Z})_{k}$. So it is
isomorphic to a subgroup scheme of $\mathbb{G}_{a,k}$, hence its
essential dimension is $1$. On the other hand $\operatorname{ed}_{K} G
_{K}=2$ since $G_{K}$ is isomorphic to
$(\mathbb{Z}/p^{2}\mathbb{Z})_{K}$. This shows that
\begin{equation*}
\operatorname{ed}_{K} G_{K}> \operatorname{ed}_{k} G_{k}.
\end{equation*}
\end{itemize}
\end{exam}

\bibliographystyle{amsalpha}
\bibliography{essdimdvr_weak4}
\end{document}